\title{\textbf{Stability analysis of a modified Leslie--Gower predation model with weak Allee effect in the prey}}
\author{
Claudio Arancibia--Ibarra \\
School of Mathematical Sciences, Queensland University of Technology, Brisbane, Australia\\
Facultad de Ingeniería y Negocios, Universidad de Las Américas, Santiago, Chile \\
 \texttt{claudio.arancibiaibarra@qut.edu.au} \\
  \AND
Jos\'e Flores \\
Department of Mathematics, The University of South Dakota, South Dakota, USA\\
 \texttt{Jose.Flores@usd.edu} \\
 \AND
Peter van~Heijster \\
Biometris, Wageningen University and Research, Wageningen, Netherlands\\
 \texttt{peter.vanheijster@wur.nl} \\  
}
\begin{document}
\maketitle
%%%%%%__________________Abstract__________________%%%%%%
\begin{abstract} \normalsize

In this manuscript, we study a Leslie--Gower predator-prey model with a hyperbolic functional response and weak Allee effect. The results reveal that the model supports coexistence and oscillation of both predator and prey populations. We also identify regions in the parameter space in which different kinds of bifurcations, such as saddle-node bifurcations, Hopf bifurcations and Bogdanov--Takens bifurcations.
\end{abstract}

%%%%%%__________________Keywords__________________%%%%%%
\keywords{ Leslie--Gower model \and Weak Allee effect \and Holling type II \and Bifurcations \and Numerical simulation.}

%%%%%%%%%%%%%%%%%%%%%%%%%%%%%__ ___INTRODUCTION_____%%%%%%%%%%%%%%%%%%%%%%%%%%%%%
\section{Introduction}

Predator-prey models are studied in both applied mathematics~\cite{arancibia7,kundu,martinez2} and ecology~\cite{mateo,mondal,santos,turchin}. The goal of these studies is to describe and analyse the predation interaction between the predator and the prey and predict how they respond to future interventions~\cite{hooper,may}. These studies often use mathematical models to describe the species’ interactions and the time-series behaviours~\cite{moller,monclus}. The models aim to be representative of real natural phenomena capturing the essentials of the dynamics. However, new technologies used to study biological and physical phenomenon reveal that species’ interactions are more complex than previously used in the models~\cite{turchin,hanski,hanski2,roux}. The importance of these more complex interactions are becoming increasingly apparent as research findings have shown that ecosystem dynamics depend on the particular nature of the interaction processes, such as the functional response and predation rate~\cite{santos,turchin,bimler,wood}.  

A standard approach for using models to understand ecological systems is to design a framework based on simple principles and compare species’ abundance that result from the predicting analysis of those models. However, this approach becomes more difficult when additional nuances to standard models are added, making them more complex and difficult to analyse. For instance, Graham and Lambin~\cite{graham} showed that field-vole (\textit{Microtus agrestis}) survival can be affected by reducing least weasel (\textit{Mustela nivalis}) predation. They also demonstrated that weasels were suppressed in summer and autumn, while the vole (\textit{Microtus agrestis}) population always declined to low density. However, the authors argued that the underlying model was too difficult to study due to a large number of parameters. 

The model used by Graham and Lambin in~\cite{graham} is a Leslie--Gower predator-prey model~\cite{leslie} and is given by 
\begin{equation}\label{eq02}
\begin{aligned}
\dfrac{dN}{dt} &=	 rN\left( 1-\dfrac{N}{K}\right) - \dfrac{qNP}{N+a}\,, \\
\dfrac{dP}{dt} &=	 sP\left( 1-\dfrac{P}{hN}\right)\,.
\end{aligned}  
\end{equation}
Here, $N\left(t\right)>0$ and $P\left(t\right)>0$ are used to represent the size of the prey and predator population at time $t$ respectively. The prey population grows logistically with carrying capacity $K$ and intrinsic growth rate $r$. The growth of the predator is also logistic, with intrinsic growth rate $s$, but the carrying capacity is prey dependent and $h$ is a measure of the quality of the prey as food for the predator. The functional response is Holling type II where $q$ is the maximum predation rate per capita and $a$ is half of the saturated response level~\cite{turchin}. 

In population dynamics, many ecological mechanisms are connected with individual cooperation such as strategies to hunt, collaboration in unfavourable abiotic conditions and reproduction~\cite{courchamp}. An example of such an ecological mechanism is the cooperative interaction observed in Kakapo \textit{(Strigops habroptilus)}. This interaction can be influenced by reducing the fecundity levels and the population size~\cite{courchamp2}. Another factor that affects the population dynamics is the species density. For instance, when the predator population size is low they have more resources and benefits. However, there are species that may suffer from a lack of conspecifics, which may be less likely to reproduce or survive in a small-sized population~\cite{stephens}. In these instances, the size of the population is important, as for a smaller size of biomass adaptability may be diminished~\cite{courchamp2}. When Allee and collaborators~\cite{allee} analysed the data of the false weevil ({\em Tribolium confusum}) they observed that the highest growth rates of their populations per capita were at intermediate densities~\cite{courchamp2}. The fact that they were lower in high densities was not surprising, as intraspecific competition is high. In contrast, when fewer males were present, females produced fewer eggs, which is not an obvious correlation for an insect. In this case, optimal egg production was thus achieved at intermediate densities. This effect is now referred to as the \emph{Allee effect}~\cite{berec,allee}. It is modelled by incorporating the factor $(N-m)$ into the logistic equation, with the idea that when the prey population $N$ decreases below the Allee threshold $m>0$ the prey rate of growth becomes negative, indicating signs of extinction. Therefore, the prey logistic growth $r\left(1-N/K\right)$ is replaced by $r\left(1-N/K\right)\left(N-m\right)$. For $0<m<K$, the per-capita growth rate of the prey population with the Allee effect included is negative, but increasing, for $N\in[0,m)$, and this is referred to as the \emph{strong Allee effect}.  When $m\leq0$, the per-capita growth rate is positive but increases at low prey population densities and this is referred to as the \emph{weak Allee effect}~\cite{berec,courchamp2}. With the Allee effect included, the Leslie--Gower predator-prey model~\eqref{eq02} of~\cite{graham} becomes
\begin{equation}
\begin{aligned} \label{eq03}
\dfrac{dN}{dt} &=	 rN\left( 1-\dfrac{N}{K}\right)\left(N-m\right) - \dfrac{qNP}{N+a}=N\cdot W\left(N,P\right)\,, \\
\dfrac{dP}{dt} &=	 sP\left( 1-\dfrac{P}{hN}\right)=P\cdot R\left(N,P\right)\,.
\end{aligned}
\end{equation}
%%% Add figure 
The aim of this manuscript is to study the Leslie--Gower predator-prey model with weak Allee effect on prey, that is~\eqref{eq03} with $m<0$. By considering a weak Allee effect in the prey population we complement the results of  Ostfeld and Canhan~\cite{ostfeld}. The authors studied the stabilisation of a file-vole population which depends on the variation in reproductive rate and recruitment of the population, i.e. an Allee effect. The weak Allee effect can be also observed in the File-vole species as the survival rate for adults were delayed~\cite{ostfeld}. However, these phenomena were not considered by the authors as they used model~\eqref{eq02}.  
This manuscript also extends some of the results obtained by Arancibia--Ibarra and Gonz\'alez--Olivares~\cite{arancibia} and Gonz\'alez--Olivares {\em et al.\ }~\cite{gonzalez3} for a modified Leslie--Gower model with $m=0$, that is, with a specific type of weak Allee effect, see second row of Table~\ref{T1}. Leslie--Gower models with strong Allee effect and different type of functional responses have been extensively studied in~\cite{gonzalez6,tintinago,arancibia3}, see first row of Table~\ref{T1}. In these articles the authors showed that the system, for certain system parameters, supports the extinction of both species and it also supports the stabilisation of both populations over the time, i.e. coexistence. Moreover, system~\eqref{eq03} with $m<0$ complements the results of the Leslie--Gower model studied by Courchamp {\em et al.}~\cite{courchamp2}. The authors in~\cite{courchamp2} argue that the Allee effect in prey generally destabilises the dynamics between the prey and the predator. Moreover, the Allee effect can prevent the oscillation of both populations. However, in this manuscript, we find that the Leslie--Gower model with weak Allee effect supports the coexistence and oscillation of both populations. 

\begin{table}
\centering 
\begin{tabular}{llccc}
\hline
Allee Threshold & Positive Equilibria & \multicolumn{3}{l}{Phase Plane}  \\
\multirow{2}{*}{$m>0$} & \multirow{2}{*}{At most 2} & & & \\
 && \includegraphics[width=3.5cm]{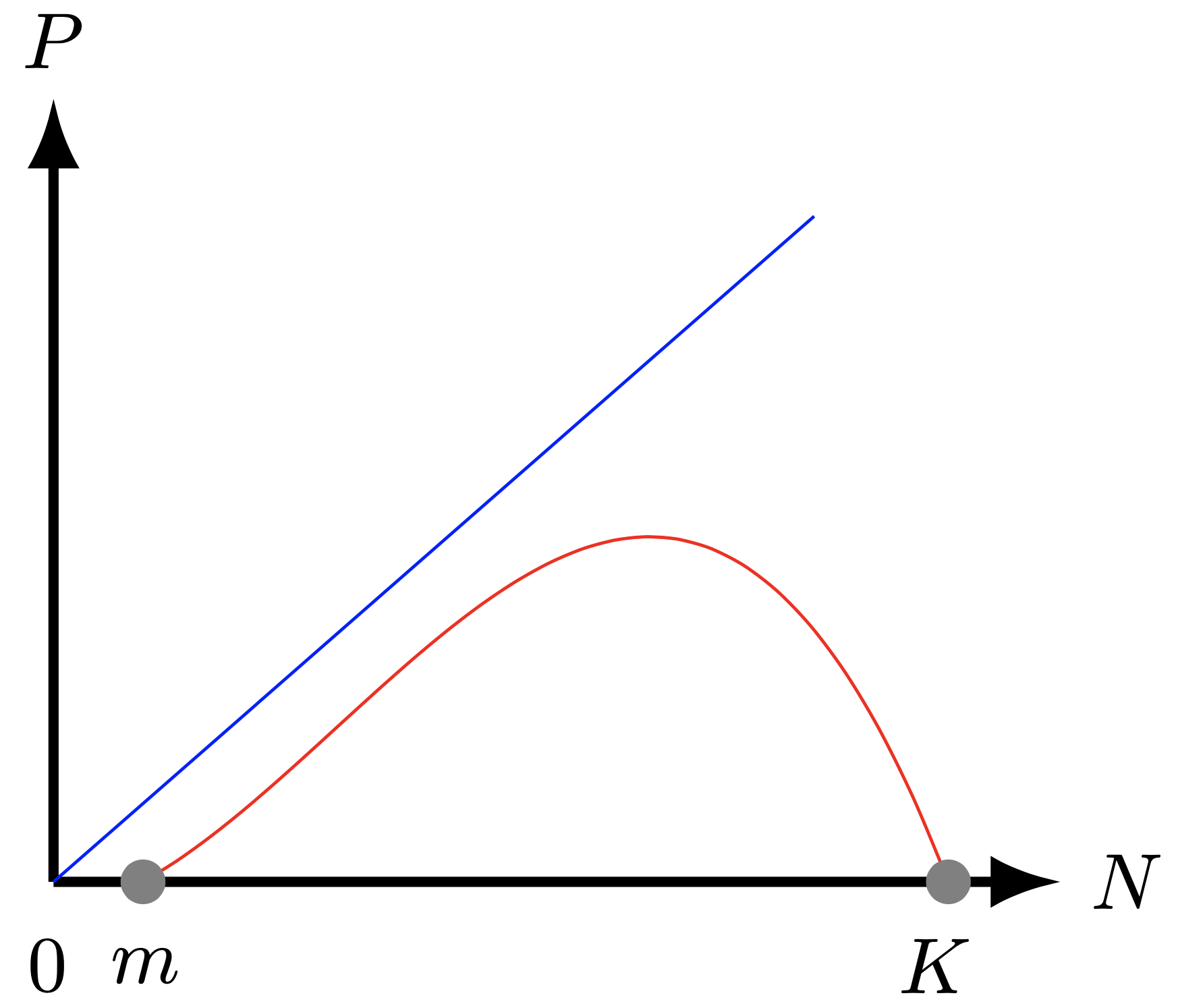} & \includegraphics[width=3.5cm]{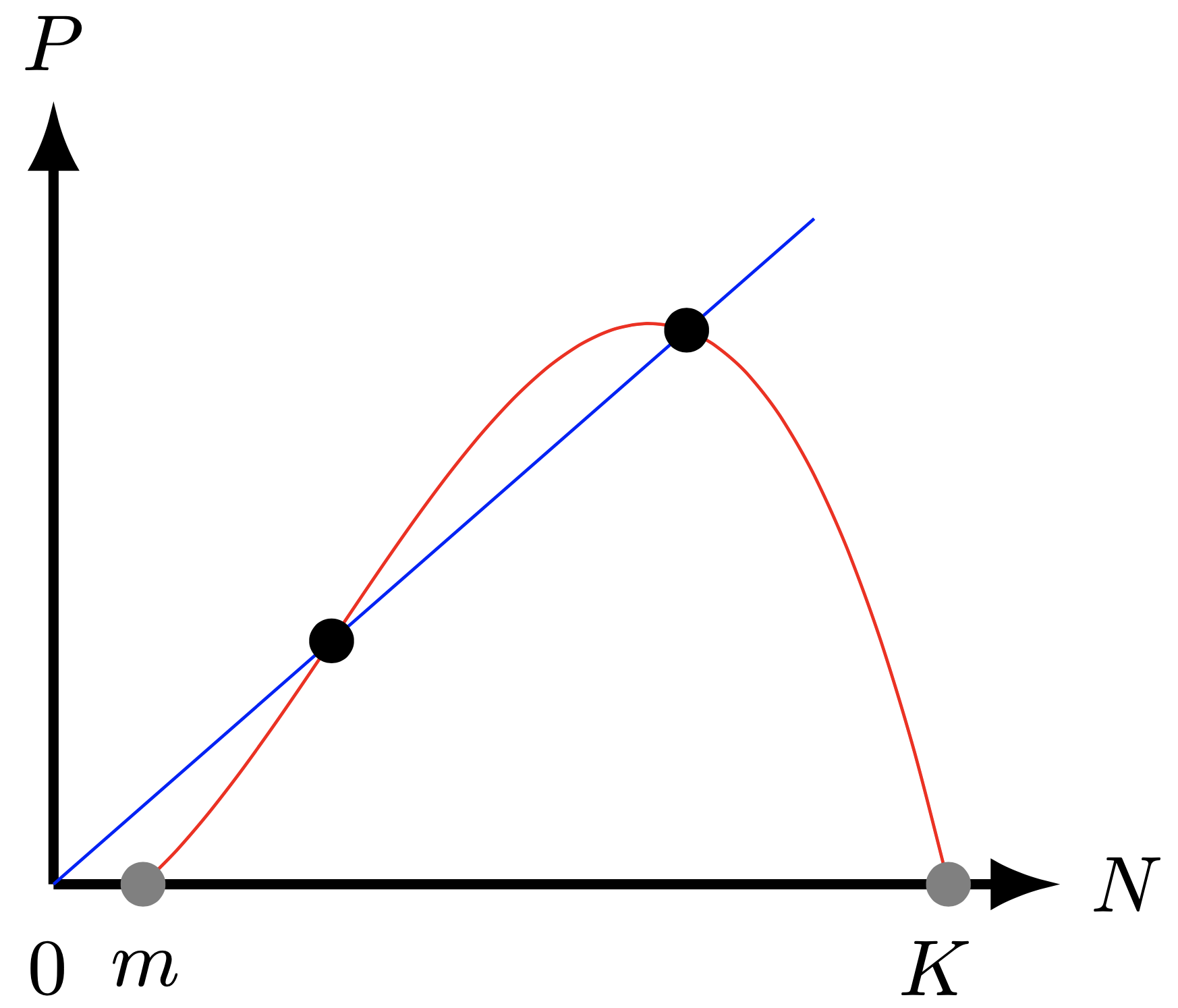} & \\
 \multirow{2}{*}{$m=0$} & \multirow{2}{*}{At most 2} & & & \\
 && \includegraphics[width=3.5cm]{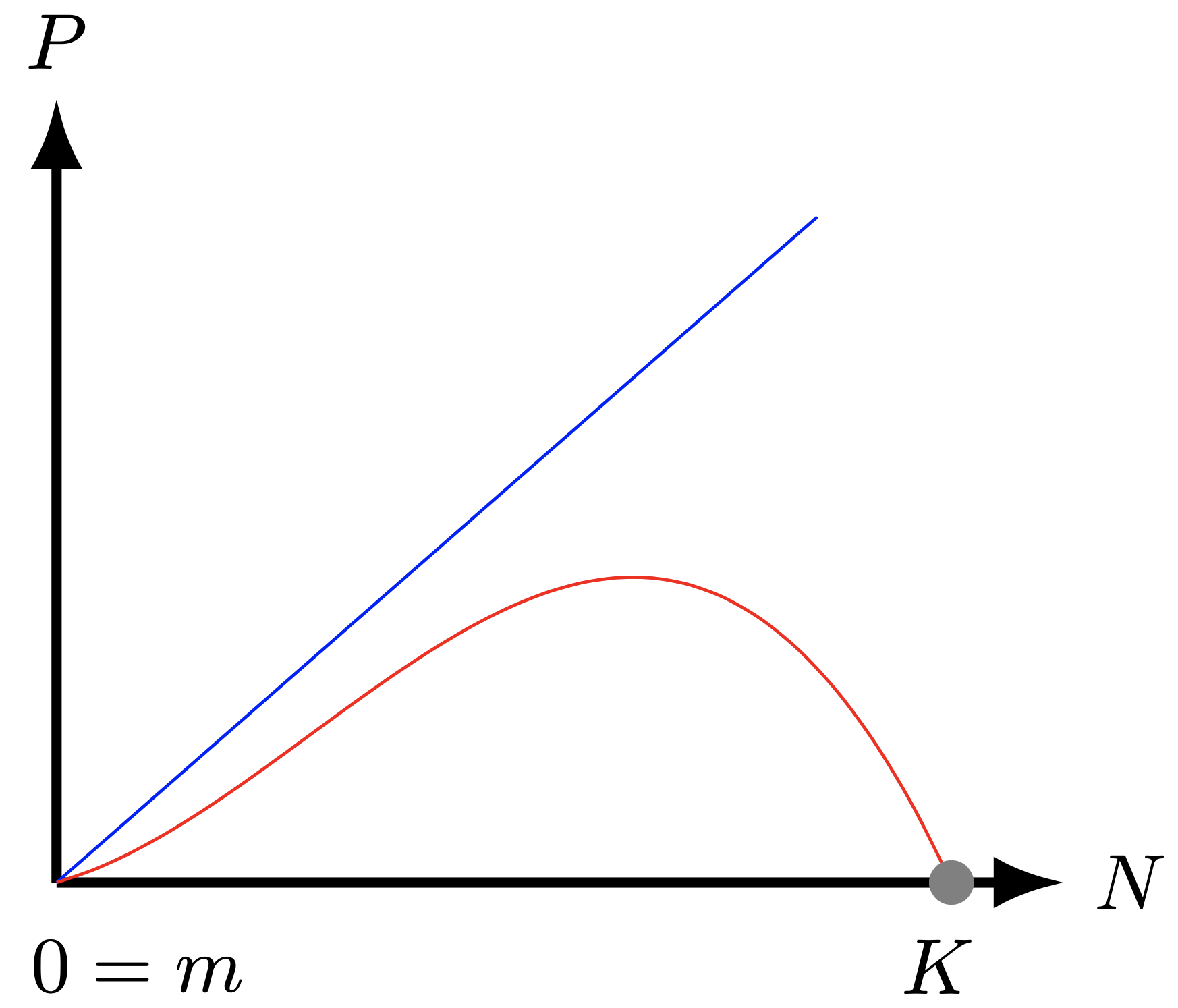} & \includegraphics[width=3.5cm]{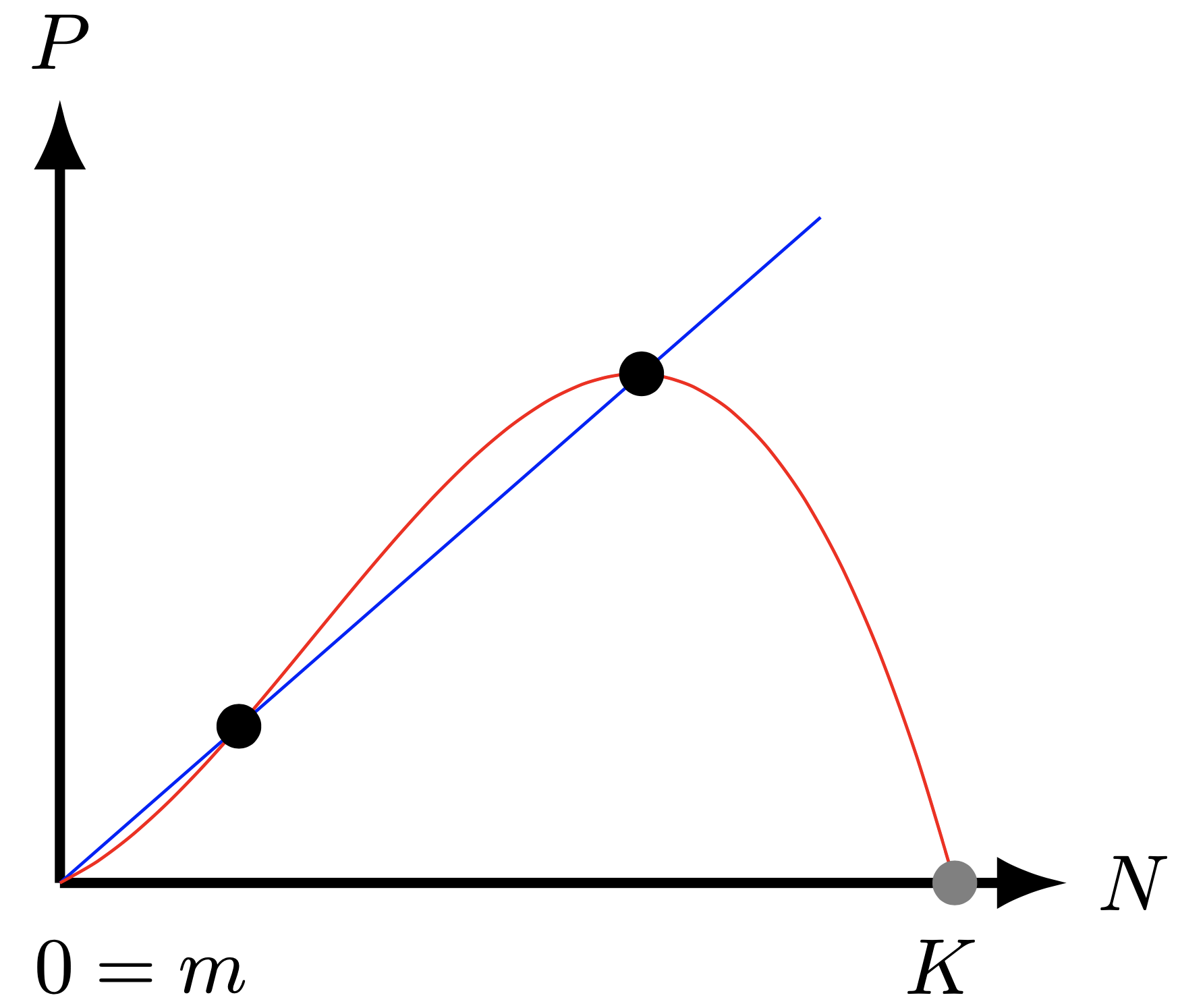} & \includegraphics[width=3.5cm]{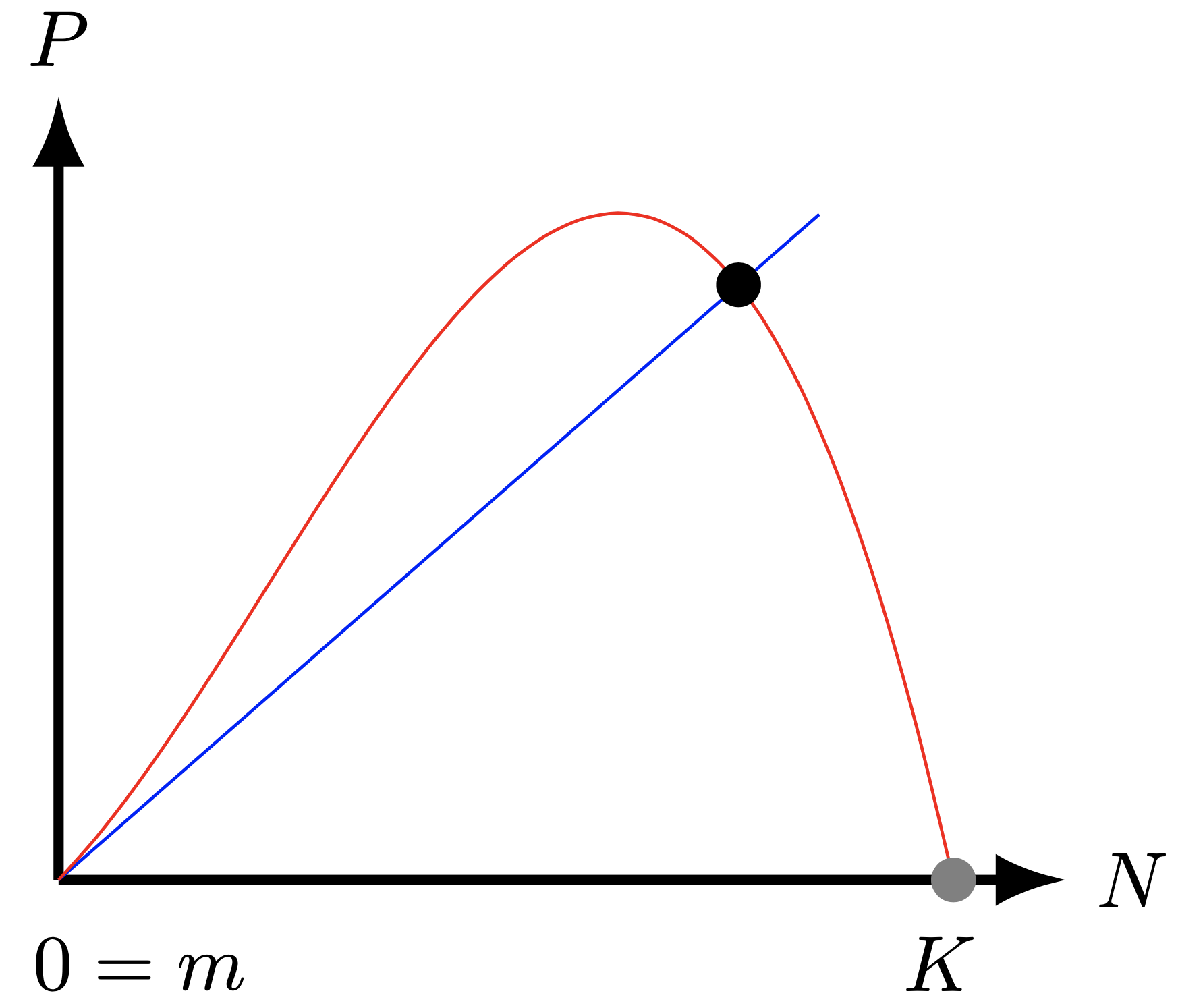}\\
 \multirow{2}{*}{$m<0$} & \multirow{2}{*}{At most 3} & & & \\
 && \includegraphics[width=3.5cm]{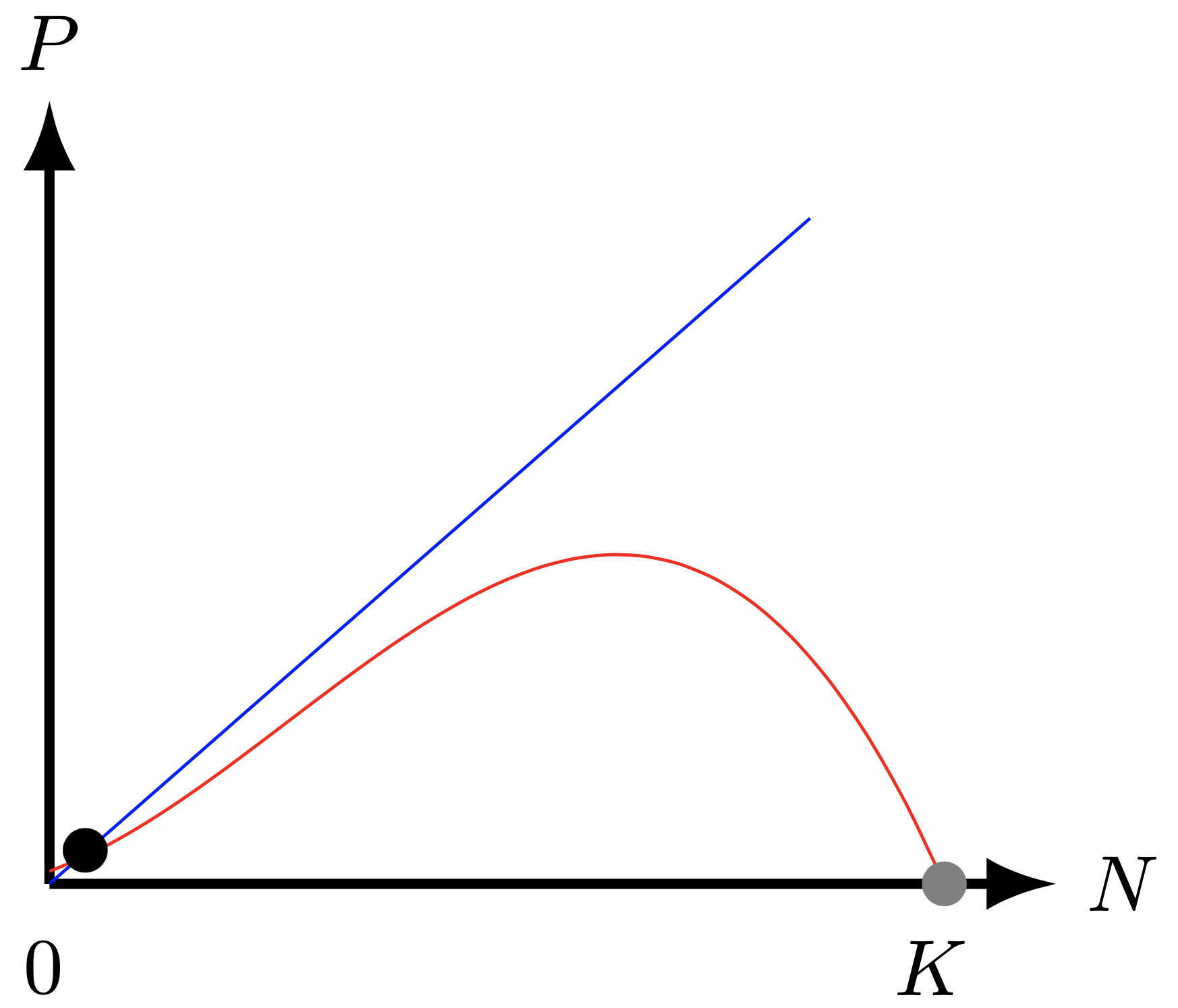} & \includegraphics[width=3.5cm]{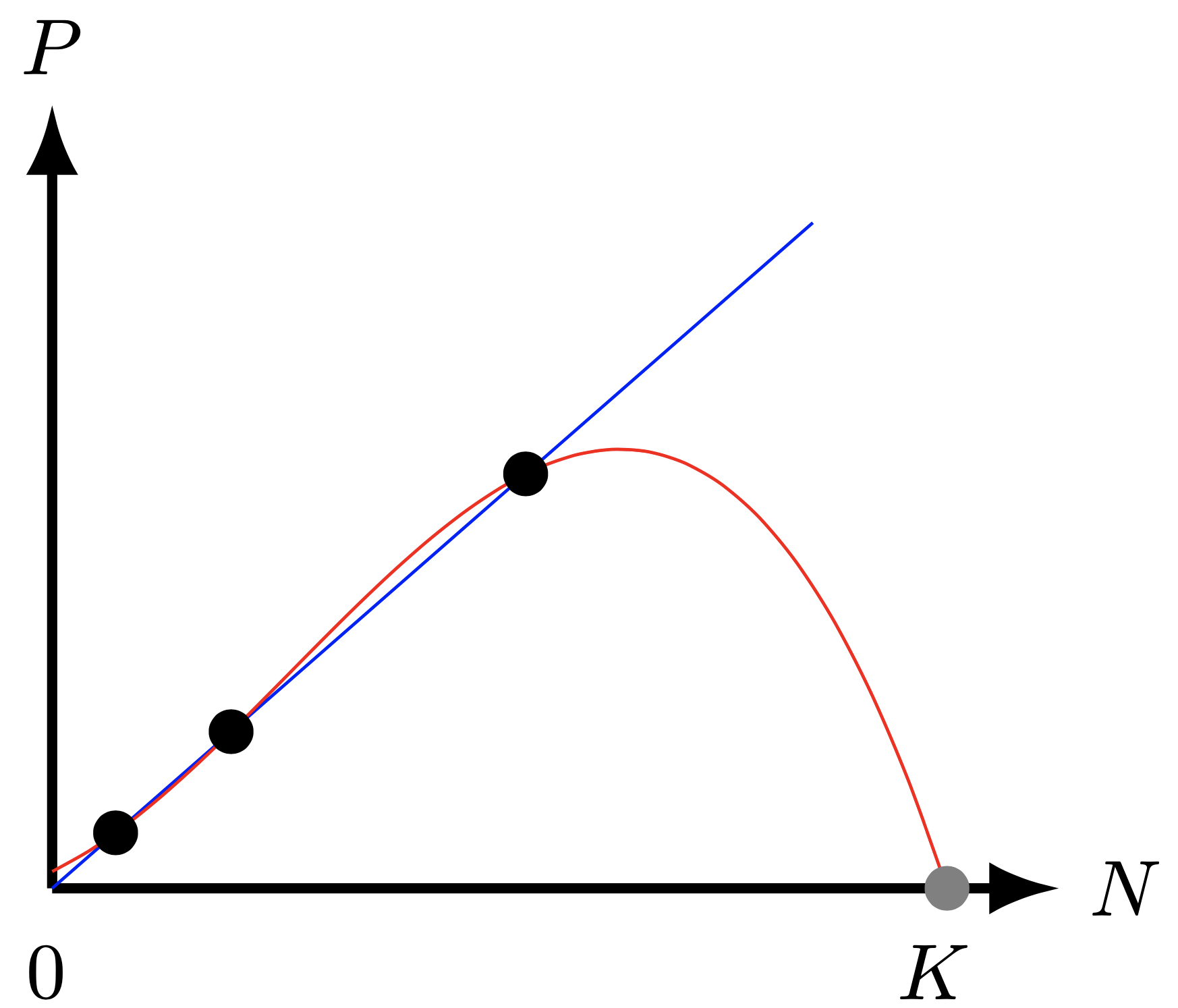} &  \includegraphics[width=3.5cm]{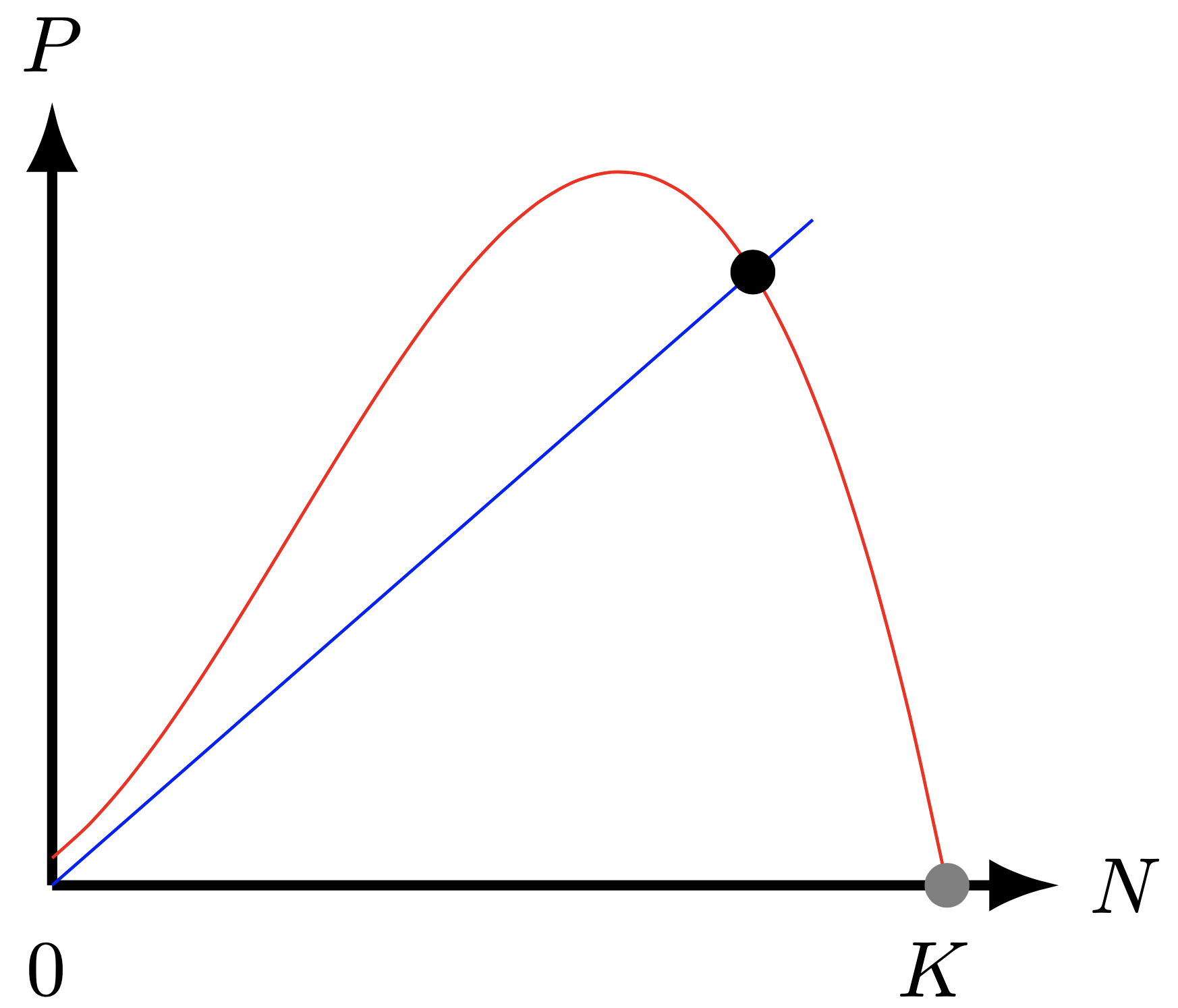}\\
\hline
\end{tabular}
\caption{Number of equilibria of system~\eqref{eq03} with strong ($m>0$), degenerate weak ($m=0$) and weak ($m<0$) Allee effect. The number of equilibria showed in the last row are obtained by varying the predation rate $q$.}\label{T1}
\end{table}

In Section~\ref{S01} we nondimensionalise the Leslie--Gower model with weak Allee effect and discuss the number of equilibria the model has in the first quadrant. The main mathematical difference between system~\eqref{eq02}, ~\eqref{eq03} with $m<0$, and ~\eqref{eq03} with $m\geq0$, is the fact that~\eqref{eq03} with $m<0$ has at most three positive equilibria\footnote{We will refer to equilibria with two positive entries as positive equilibria.} in the first quadrant instead of one for system~\eqref{eq02} and two for system~\eqref{eq03} with $m\geq0$~\cite{arancibia,gonzalez3,arancibia3}. These additional equilibrium gives rise to different type of bifurcations, such as saddle-node bifurcations, Bogdanov--Takens bifurcations, and homoclinic bifurcations. The main properties of the equilibria are studied in Section~\ref{S02}. In particular, we study the stability of the equilibria and present the conditions for which the model undergoes different type of bifurcations. Finally, in Section~\ref{con} we summarise the results and discuss the ecological implications of the model.

%%%%%%%%%%%%%%%%%%%%%%%%%%%%%_____MODEL_____%%%%%%%%%%%%%%%%%%%%%%%%%%%%%
\section{The Model}\label{S01}
The Leslie--Gower model with weak Allee effect is given by~\eqref{eq03} with $m<0$, and we only consider the model in the domain $\Omega=\{\left(N,P\right)\in\mathbb{R}^2,N>0, P\geq0\}$ and $\left(r,K,q,a,s,h\right)\in \mathbb{R}{}{}_{+}^{6}$. The axes in system~\eqref{eq03} are invariant since~\eqref{eq03} is of Kolmogorov  type~\cite{dumortier} (i.e. $dN/dt=N\cdot W\left(N,P\right)$ and $dP/dt=P\cdot R\left(N,P\right)$). The equilibria of system~\eqref{eq03} are $\left(K,0\right)$ and $\left(x^*,y^*\right)$, which are the intersections of the nullclines \[P=hN \quad \text{and} \quad P=\dfrac{r}{q}\left( 1-\dfrac{N}{K}\right)\left(N+a\right)\left(N-m\right).\]

We follow the nondimensionalisation approach of~\cite{arancibia3} to simplify the analysis and remove the $N=0$ singularity for the model. We introduce $\bar{\Omega}=\{\left(u,v\right)\in\mathbb{R}^2,u\geq0, v\geq0\}$ and the dimensionless variables $\left(u,v,\tau\right)$ given by 
\begin{equation}\label{eq04}
\begin{aligned}
& \varphi :\bar{\Omega}\times\mathbb{R}\rightarrow \Omega\times\mathbb{R}~\text{where}~\varphi\left(u,v,\tau\right)=\left(\dfrac{N}{K},\dfrac{P}{hK},\dfrac{rKt}{u\left(u+\dfrac{a}{K}\right)}\right).
\end{aligned}  
\end{equation}
Observe that $\varphi$~\eqref{eq04} is a diffeomorphism which preserve the orientation of time~\cite{andronov,chicone}. Next, set $A:=a/K\in\left(0,1\right)$, $S:=s/\left(rK\right)$, $Q:=hq/\left(rK\right)$ and $M:=m/K$, then~\eqref{eq03} transforms into the nondimensionalised system
\begin{equation}\label{eq05}
\begin{aligned}
\dfrac{du}{d\tau} & =  \ u^2\left(\left(u+A\right)\left( 1-u\right)\left(u-M \right) -Qv\right)\,,\\
\dfrac{dv}{d\tau} & =  \ S\left(u+A\right)\left(u-v\right)v \,.
\end{aligned}  
\end{equation}

The $u$-nullclines of system~\eqref{eq05} are given by $u=0$ and $v=\left(u+A\right)\left(1-u\right)\left(u-M\right)/Q$, while the $v$-nullclines of interest are given by $v=0$ and $v=u$. Hence, the equilibria of system~\eqref{eq05} are $\left(0,0\right)$\footnote{Note that~\eqref{eq03} is singular along $N=0$, and the equilibria $(0,0)$ is thus also a singular point in~\eqref{eq03} with $m<0$. }, $\left(1,0\right)$, and the positive equilibria $\left(u^*,v^*\right)$ with $v^*=u^*$ and where $u^*$ is determined by the solution(s) of $\left(u+A\right)\left( 1-u\right)\left(u-M \right)/Q=u$, or, equivalently,
\begin{equation}\label{eq06}
g(u):=u^3-T\left(A,M\right)u^2-L\left(A,M,Q\right)u+AM =0\,,
\end{equation}
with $T\left(A,M\right)=1-A+M$ and $L\left(A,M,Q\right)=A\left(M+1\right)-Q-M$. 
Since $g(0)=AM<0$ and $g(1)=Q>0$, there is always at least one positive equilibrium $(u^*,u^*)$ with $0<u^*<1$.
To obtain information about potential other positive equilibria, we divide~\eqref{eq06} by $\left(u-u^*\right)$ and obtain the quadratic equation 
\begin{equation}\label{eq07}
u^2+u\left(u^*-T\left(A,M\right)\right)+u^*\left(u^*-T\left(A,M\right)\right)-L\left(A,M,Q\right)=0.
\end{equation}
Note that as system~\eqref{eq05} is topologically equivalent to system~\eqref{eq03}, the solution $u^*$ of~\eqref{eq06} is related to a population equilibrium of system~\eqref{eq03}. Therefore, a positive equilibrium point $P=(u^*,u^*)$ in system~\eqref{eq05} correspond to a coexisting equilibrium point in system~\eqref{eq03}.
\begin{figure}
\centering	
\begin{subfigure}[b]{0.3\textwidth}\centering
\caption{$Q<Q^-$.}\label{F02_a}
\includegraphics[width=5cm]{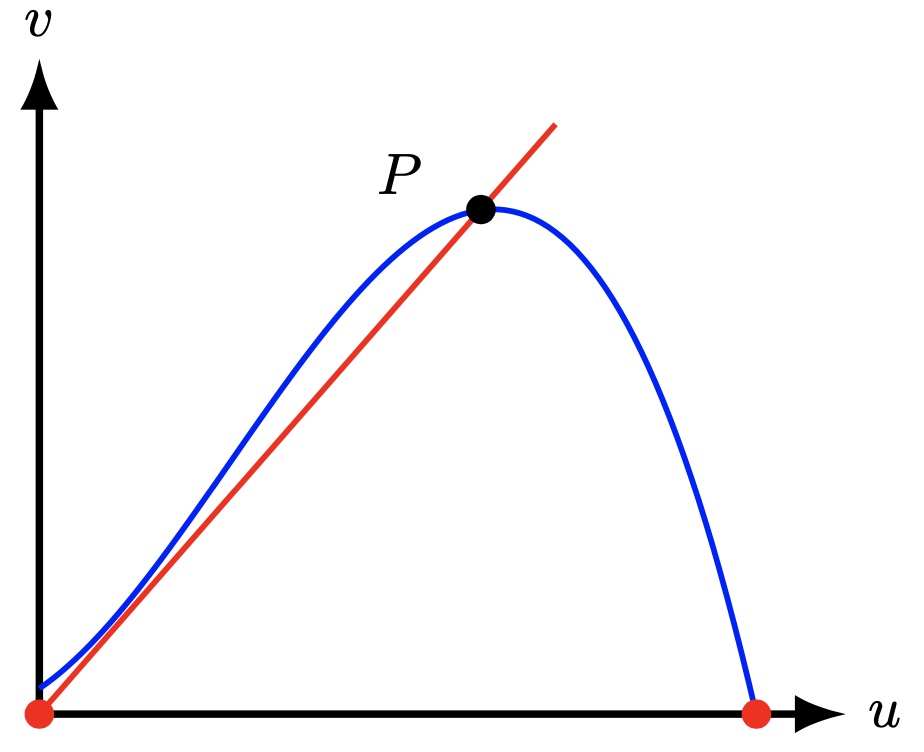}
\end{subfigure}
\hfill
\begin{subfigure}[b]{0.3\textwidth}\centering
\caption{$Q=Q^-$.}\label{F02_b}
\includegraphics[width=5cm]{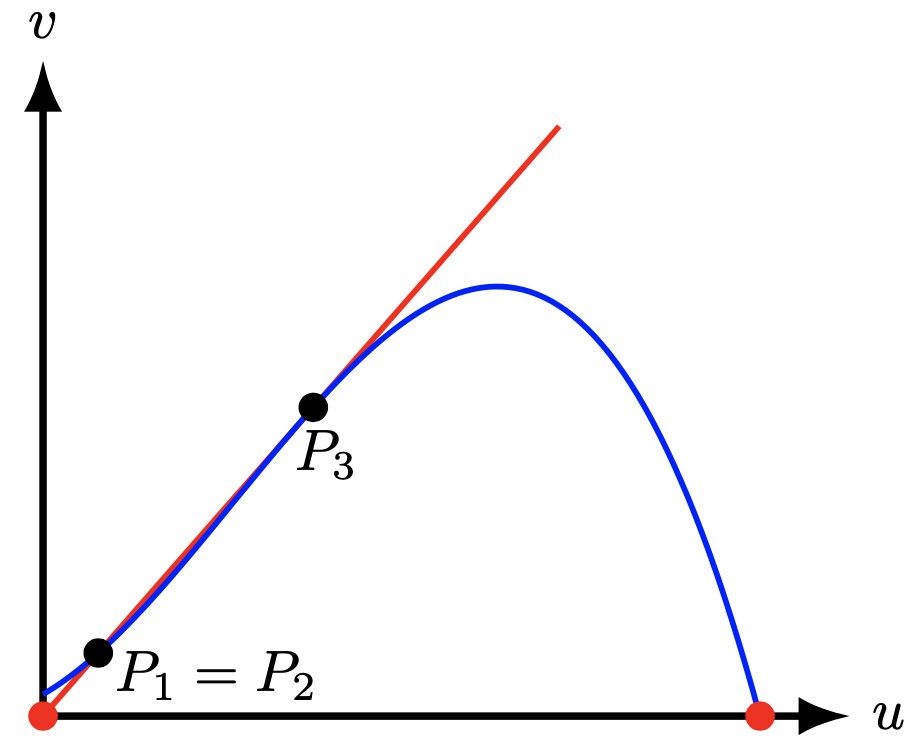}
\end{subfigure}
\hfill
\begin{subfigure}[b]{0.3\textwidth}\centering
\caption{$Q^-<Q<Q^+$.}\label{F02_c}
\includegraphics[width=5cm]{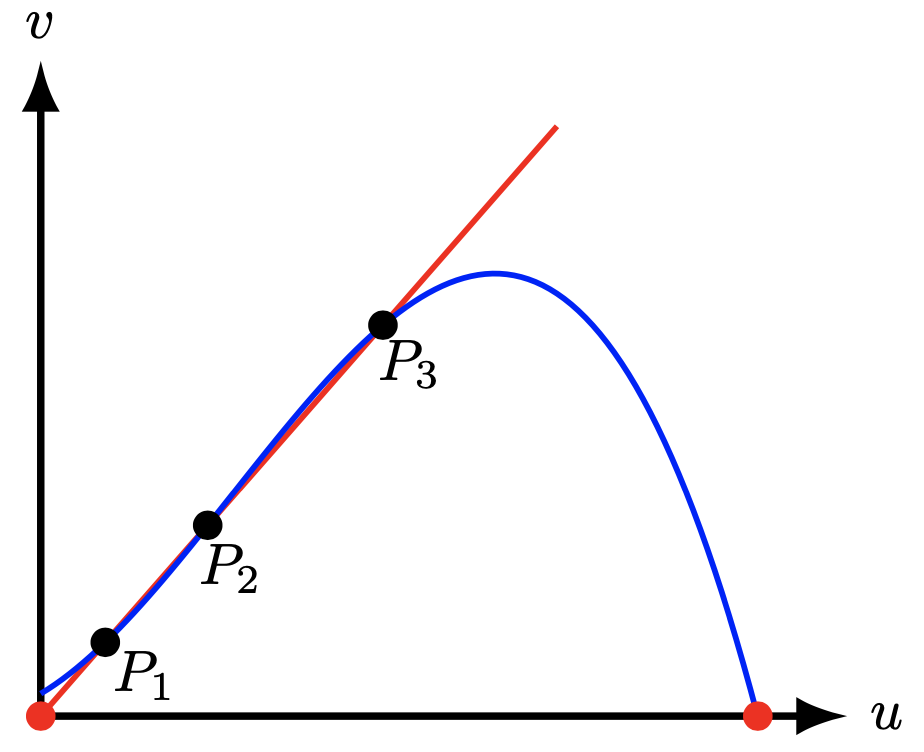}
\end{subfigure}
\hfill
\\
\vspace{0.2in}
\begin{subfigure}[t]{.496\linewidth}\centering
\resizebox{\linewidth}{!}{	
\includegraphics[width=5cm]{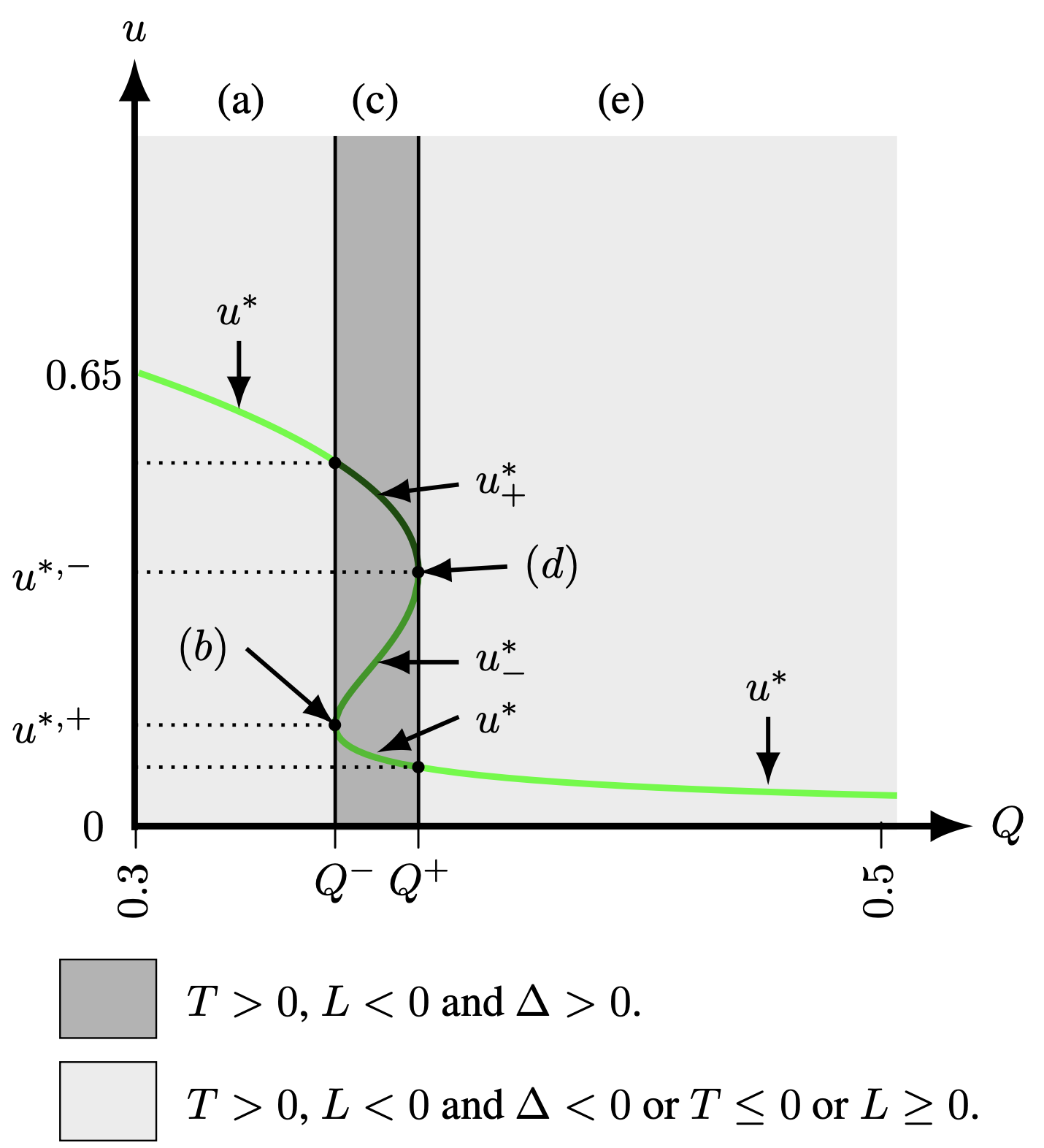}}
\end{subfigure}
\hfill
\\
\vspace{0.2in}
\begin{subfigure}[b]{0.3\textwidth}\centering
\caption{$Q=Q^+$.}\label{F02_d}
\includegraphics[width=5cm]{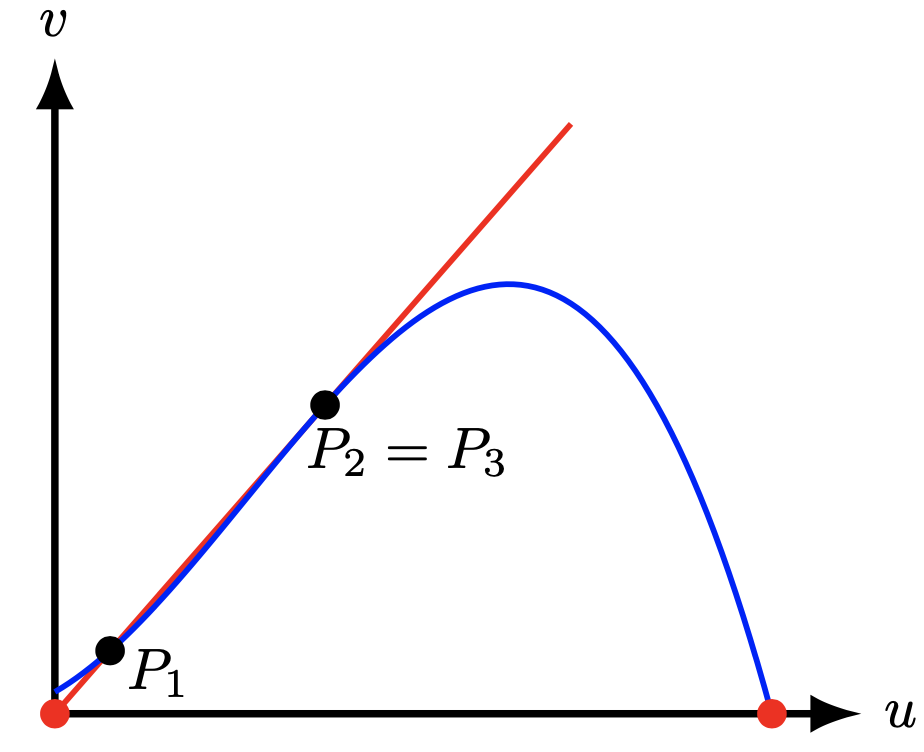}
\end{subfigure}
\begin{subfigure}[b]{0.3\textwidth}\centering
\caption{$Q>Q^+$.}\label{F02_e}
\includegraphics[width=5cm]{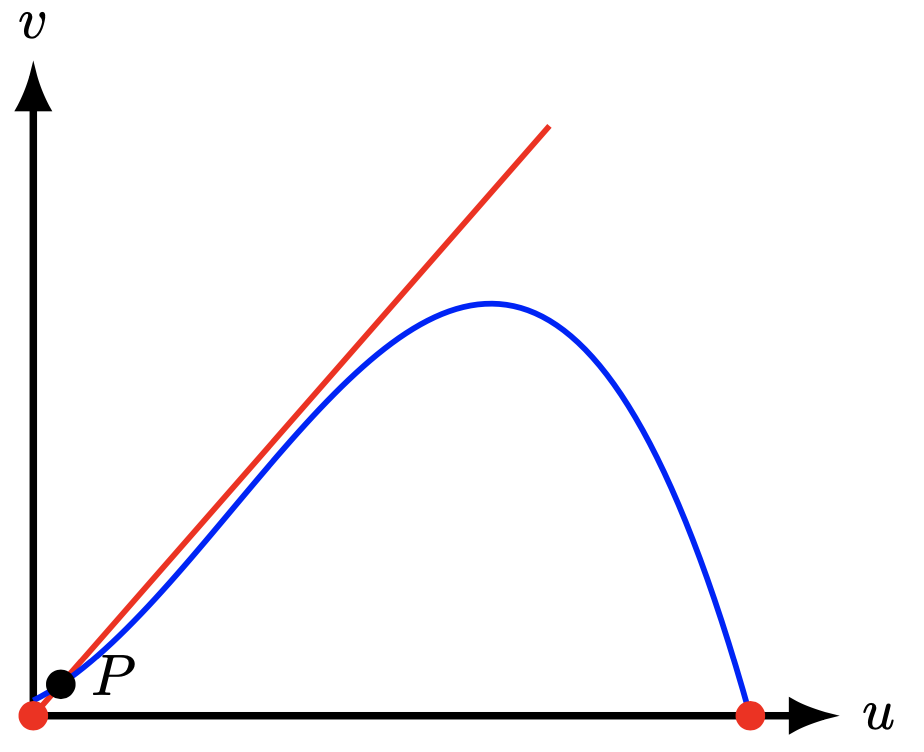}
\end{subfigure}
\caption{The schematic diagram of the number of positive roots of equation~\eqref{eq06} in the $(u,Q)$-space for $(A,M)=(1/10,-1/10)$ fixed and with $T=1-A+M$, $L=A\left(M+1\right)-Q-M$, $\Delta=\left(u^*-T\left(A,M\right)\right)^2-4\left(u^*\left(u^*-T\left(A,M\right)\right)-L\left(A,M,Q\right)\right)$ and $u^*$ is determined by the solution(s) of~\eqref{eq06}. At $Q=Q^\pm := (73 \pm \sqrt5)/200$, that is, at $Q^-\approx0.35381966$ and $Q^+\approx0.376180$, 
two positive equilibria collapse. For $Q<Q^-$ or $Q>Q^+$, equation~\eqref{eq06} has only one positive root (light grey regions). Hence, system~\eqref{eq05} has only one positive equilibrium $P=(u^*,u^*)$. For $ Q^-<Q<Q^+$, equation~\eqref{eq06} has three positive roots (dark grey region) and system~\eqref{eq05} thus has three positive equilibria.}\label{F02}
\end{figure}

\begin{lemm}\label{NEP}
The positive roots of $g(u)$~\eqref{eq06} lie in $(0,1)$, and 
\begin{enumerate}[label=(\Roman*)]
\item \label{NEP1} 
if $T\left(A,M\right)\leq0$ or $L\left(A,M,Q\right)\geq0$,
then~$g(u)$~\eqref{eq06} has one positive root and system~\eqref{eq05} thus has one positive equilibrium, 
see~\ref{F02_a} and~\ref{F02_e} in Figure~\ref{F02}.
\item \label{NEP2} if $T\left(A,M\right)>0$, $L\left(A,M,Q\right)<0$ and 
\begin{enumerate}[label=(\roman*)]
\item \label{NEP2a} $\Delta<0$, where $\Delta$ is the discriminant of~\eqref{eq07} (with respect of the dependent variable $u$) given by
\begin{equation}\label{delta}
\Delta:=\left(u^*-T\left(A,M\right)\right)^2-4\left(u^*\left(u^*-T\left(A,M\right)\right)-L\left(A,M,Q\right)\right),
\end{equation} 
then~$g(u)$~\eqref{eq06} has one positive root and system~\eqref{eq05} thus has one positive equilibrium, see~\ref{F02_a} and~\ref{F02_e} in Figure~\ref{F02};
\item \label{NEP2b} $\Delta\geq0$~\eqref{delta}, then~$g(u)$~\eqref{eq06} has three roots (counting multiplicity) and system~\eqref{eq05} thus has three positive equilibria, see~\ref{F02_b} -- \ref{F02_d} in Figure~\ref{F02}. 
\end{enumerate}
\end{enumerate}
\end{lemm}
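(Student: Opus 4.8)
The plan is to combine the factorization of $g$~\eqref{eq06} into $(u-u^*)$ times the quadratic~\eqref{eq07} with Vieta's relations for $g$ and, at one decisive point, Descartes' rule of signs. It is convenient first to record that $g(u)=Qu-f(u)$ with $f(u):=(u+A)(1-u)(u-M)$, so that $g(0)=AM<0$ and $g(1)=Q>0$ hold at once. For the localization claim I would show $g$ has no positive root in $[1,\infty)$: for $u\ge1$ the factors satisfy $u+A>0$, $u-M>0$ (as $M<0$) and $1-u\le0$, hence $f(u)\le0$ and $g(u)=Qu-f(u)>0$. With $g(0)<0$ this confines every positive root to $(0,1)$ and, by the intermediate value theorem, yields the root $u^*\in(0,1)$ used in the factorization.

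The remaining two roots of $g$ are precisely the roots of the quadratic~\eqref{eq07}, whose reality is governed by $\Delta$~\eqref{delta}. Vieta applied to $g$ gives that the product of all three roots equals $-AM>0$; since $u^*>0$, the product of the two roots of~\eqref{eq07} equals $-AM/u^*>0$, so when real they share a sign. Hence $g$ has either exactly one positive root (the extra pair complex or both negative) or exactly three (the extra pair both positive). Three positive roots $r_1,r_2,r_3$ would force, again by Vieta, $T(A,M)=\sum_i r_i>0$ and $-L(A,M,Q)=\sum_{i<j}r_ir_j>0$, i.e.\ $L(A,M,Q)<0$. This establishes~\ref{NEP1}: whenever $T(A,M)\le0$ or $L(A,M,Q)\ge0$, three positive roots are impossible and a single positive equilibrium remains.

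For~\ref{NEP2}, assume $T(A,M)>0$ and $L(A,M,Q)<0$. If $\Delta<0$, the quadratic~\eqref{eq07} has no real roots and only $u^*$ survives, which is case~\ref{NEP2a}. If $\Delta\ge0$, then $g$ has three real roots (counting multiplicity), and to see that all of them are positive I would apply Descartes' rule of signs to $g(-u)=-u^3-T(A,M)u^2+L(A,M,Q)u+AM$: its four coefficients are all negative precisely because $T(A,M)>0$, $L(A,M,Q)<0$, $A>0$ and $M<0$. The absence of sign changes forces $g$ to have no negative root, so the three real roots are positive and, by the localization step, lie in $(0,1)$; this is case~\ref{NEP2b}.

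The step demanding the most care is excluding negative roots when $\Delta\ge0$ in~\ref{NEP2b}: the Vieta sum and product relations alone still permit two negative roots, and it is the Descartes sign count on $g(-u)$ that removes this possibility cleanly. I would also check the degenerate boundaries --- $\Delta=0$, where the double root must still be counted with multiplicity, and $L(A,M,Q)=0$, where the linear coefficient of $g$ vanishes --- so that the sign-change bookkeeping remains valid.
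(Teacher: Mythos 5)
Your proof is correct, but it takes a genuinely different route from the paper's at the two decisive steps. The common skeleton is the same: the intermediate-value root $u^*\in(0,1)$ from $g(0)=AM<0$ and $g(1)=Q>0$, the factorization of $g$ by $\left(u-u^*\right)$ into the quadratic~\eqref{eq07}, and the discriminant $\Delta$~\eqref{delta} settling case~\ref{NEP2}\ref{NEP2a}. The paper handles case~\ref{NEP1} by applying Descartes' rule of signs directly to $g(u)$ (under $T\leq0$ or $L\geq0$ its coefficient sequence has exactly one sign change), while you argue via Vieta: the product of the three roots is $-AM>0$, so the two roots of~\eqref{eq07} share a sign whenever they are real, and three positive roots would force $T>0$ and $L<0$; the contrapositive gives~\ref{NEP1}. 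For case~\ref{NEP2}\ref{NEP2b} the paper localizes $u^*_\pm$ in $(0,1)$ through a monotonicity estimate: under $T>0$, $L<0$ it shows $g'(u)\geq0$ outside $(0,T)$ with $T<1$, so $g$ is increasing there and, since $g(0)<0<g(1)$, has no real roots outside $(0,1)$. You instead rule out negative roots by Descartes applied to $g(-u)$, whose four coefficients are all negative under the case hypotheses, and rule out roots in $[1,\infty)$ by the factored identity $g(u)=Qu-\left(u+A\right)\left(1-u\right)\left(u-M\right)>0$ for $u\geq1$ (using $M<0$). Both arguments are sound and elementary; what yours buys is uniformity, since your localization proves the lemma's preamble (all positive roots lie in $(0,1)$) in every case at once, directly from the nullcline structure, whereas the paper's derivative estimate is tied to the hypotheses of case~\ref{NEP2}\ref{NEP2b}; what the paper's buys is brevity in case~\ref{NEP1}, where the sign count on $g$ itself is a one-line argument needing no symmetric-function bookkeeping.
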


\begin{proof}
The cubic equation $g(u)$~\eqref{eq06} always has at least one root in $(0,1)$, since $g(0)=AM<0$ and $g(1)=Q>0$, and 
the first case of the lemma, case \ref{NEP1}, thus immediately follows from Descartes’ rule of signs~\cite[e.g]{curtiss}. 

Case \ref{NEP2}\ref{NEP2a} also follows directly from Descartes’ rule of signs, which for $T\left(A,M\right)>0$ and $L\left(A,M,Q\right)<0$ states that $g(u)$~\eqref{eq06} has one or three positive roots (counting multiplicity), and the observation that the quadratic equation~\ref{eq06} has two complex roots as its discriminant is negative.

In the last case, case \ref{NEP2}\ref{NEP2b}, $g(u)$~\eqref{eq06} has three real roots: $u^*$ and
\begin{equation}\label{eq08}
\begin{aligned}
u^*_-&=\dfrac{1}{2}\left(T\left(A,M\right)-u^*-\sqrt{\Delta}\right),\quad u^*_+=\dfrac{1}{2}\left(T\left(A,M\right)-u^*+\sqrt{\Delta}\right).
\end{aligned} 
\end{equation} 
The latter two being the roots of~\eqref{eq07}. What remains to show is that $u^*_\pm \in (0,1)$. To show this, we look at the derivative of $g(u)$
$$
g'(u) = 3u^2-2T\left(A,M\right)u-L\left(A,M,Q\right).
$$
The assumptions $T\left(A,M\right)>0$ and $L\left(A,M,Q\right)<0$ imply that $g'(u) \ge 0$ for $u \leq 0$. Furthermore, $T\left(A,M\right) \in (0,1)$ and, for $u \geq T$, we thus also get
$$
g'(u) = 3u^2-2T\left(A,M\right)u-L\left(A,M,Q\right) > 2u(u-T\left(A,M\right))+u^2> 2u(u-T\left(A,M\right)) \geq 0\,.
$$ 
In other words, under the assumptions $T\left(A,M\right)>0$ and $L\left(A,M,Q\right)<0$, the cubic function $g(u)$ is strictly increasing outside the interval $(0,T\left(A,M\right))$ (with $T\left(A,M\right)<1$). Since $g(0)=AM<0$ and $g(1)=Q>0$, it thus follows that $u^*_\pm \in (0,1)$.  
\end{proof}

We observe that none of these equilibria explicitly depend on the system parameter $S$. Moreover, only $L\left(A,M,Q\right)$ in $g(u)$~\eqref{eq06} depends directly on $Q$. Therefore, $S$ and $Q$ are natural candidates to act as bifurcation parameters. 
For instance, when 
two of the three roots of $g(u)$ coincide, say $u_-^*$ and $u_+^*$, we change from having three roots to one root (upon changing $Q$). So, at this point we have
$$
u_-^* = u_+^* \implies u^*= u^{*,\pm}:=\dfrac{1}{3}\left(T\left(A,M\right)\pm2\sqrt{T\left(A,M\right)^2+3L\left(A,M,Q\right)}\right). 
$$
Implementing this into $g(u^{*,\pm})=0$ results in an analytic implicit expression $Q=Q^{\pm}(A,M)$ at which the number of roots changes. For example, for $(A,M)=(1/10,-1/10)$ as in Figure~\ref{F02} we get $Q^{\pm}(1/10,-1/10)=  (73 \pm \sqrt{5})/200$.\footnote{The same expressions for $Q^{\pm}$ are obtained when we compute two other roots coincide.}. 

In case \ref{NEP2}\ref{NEP2b} of Lemma~\ref{NEP} we find that $g(u)$ has three roots in $(0,1)$: $u^*$ and $u^*_\pm$ \eqref{eq08}, with $u^*_-\leq u^*_+$. The latter two roots depend on the first root $u^*$, but, {\em a priori}, we do not know anything about parity of $u^*-u^*_\pm$. However, the roots of $g(u)$ do not depend on which of the three roots we pick {\emph{a priori}} as $u^*$ in the lemma. Hence, we can, without loss of generality, assume in the remainder of this manuscript that $0<u^*\leq u^*_-\leq u^*_+<1$. However, see the proof of Corollary~\ref{C02} where we utilise this {\emph{independence}} of our initial pick.     

\section{Main Results}\label{S02}
In this section, we discuss the main results related to system~\eqref{eq05}. That is, we  discuss the nature of the equilibria (Section~\ref{S:NEP}) and their bifurcations (Section~\ref{BA}). First we observe that from~\cite[Theorem~2]{arancibia3} it instantly follows that all solutions of~\eqref{eq05} which are initiated in the first quadrant are bounded and end up in 
\begin{equation}\label{phi}
\Phi=\{\left(u,v\right),\ 0<u\leq1,\ 0\leq v\leq1\}.
\end{equation}

\subsection{The Nature of the equilibria}\label{S:NEP}
To determine the stability of the equilibria on the axes of interest $(0,0)$ and $(1,0)$ we compute the Jacobian matrix of system~\eqref{eq05}
\begin{equation}\label{eq09}
\begin{aligned}
J\left(u,v\right)=\begin{pmatrix}
-uJ_{11}  & -Qu^2 \\ 
Sv\left( A+2u-v\right)  &  S\left(u-2v\right)\left( A+u\right) 
\end{pmatrix},
\end{aligned}
\end{equation}
with $J_{11}=4Au^2-4Mu^2+2AM-3Au+3Mu+2Qv-4u^2+5u^3-3AMu$. The determinant and the trace of the Jacobian matrix~\eqref{eq09} are:
\begin{equation}\label{det_ax}
\begin{aligned}
\det\left(J\left(u,v\right)\right)=&-J_{11}Su\left(u-2v\right)\left(A+u\right)+QSu^2v\left(A+2u-v\right),\\
\tr\left(J\left(u,v\right)\right)=&-uJ_{11} +S\left(u-2v\right)\left( A+u\right)
\end{aligned}
\end{equation}

\begin{lemm}\label{TEST}
The equilibrium $\left(0,0\right)$ is a non-hyperbolic saddle point and $\left(1,0\right)$ is a saddle point.
\end{lemm}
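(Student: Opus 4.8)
The plan is to evaluate the Jacobian matrix~\eqref{eq09} at each of the two axial equilibria and classify them from the resulting eigenvalue structure. Both equilibria lie on the invariant axis $v=0$, so the Jacobian should simplify considerably there; in particular the off-diagonal entry $Sv(A+2u-v)$ vanishes when $v=0$, which means the Jacobian becomes upper triangular and its eigenvalues can be read directly off the diagonal.

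First I would treat $(0,0)$. Substituting $u=v=0$ into~\eqref{eq09}, every entry carries a factor of $u$, $u^2$, or $v$, so the entire Jacobian is the zero matrix. This makes $(0,0)$ a \emph{non-hyperbolic} equilibrium, since linearisation fails to determine its type. To justify the \textbf{saddle} character I would therefore argue at the nonlinear level: the $u$-axis and $v$-axis are both invariant (system~\eqref{eq05} is of Kolmogorov type), and I would examine the flow along each. Along $v=0$ the dynamics reduce to $du/d\tau = u^2(u+A)(1-u)(u-M)$, and with $M<0$, $A>0$ the sign of this quantity for small $u>0$ is positive (since $(u+A)$, $(1-u)$, and $(u-M)$ are all positive), so trajectories move \emph{away} from the origin along the positive $u$-axis. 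Along $u=0$ the $v$-equation gives $dv/d\tau = SA(-v)v = -SAv^2 < 0$, so on the positive $v$-axis trajectories move \emph{toward} the origin. Having one invariant direction that attracts and one that repels is exactly the signature of a saddle, establishing the claim for $(0,0)$.

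Next I would treat $(1,0)$. Substituting $u=1$, $v=0$ into~\eqref{eq09} again gives a lower-left entry of zero, so the matrix is triangular and the eigenvalues are the two diagonal entries: $\lambda_1 = -J_{11}\big|_{(1,0)}$ and $\lambda_2 = S(1)(A+1)\big|_{(1,0)} = S(A+1)$. Since $S>0$ and $A>0$, clearly $\lambda_2>0$. For $\lambda_1$ I would substitute $u=1$, $v=0$ into the definition of $J_{11}$ and simplify; I expect the expression to collapse to something proportional to $(A+1)(1-M)$ up to sign, which is positive because $A>0$ and $M<0$ force $1-M>0$, yielding $\lambda_1<0$. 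Two real eigenvalues of opposite sign make $(1,0)$ a \textbf{hyperbolic saddle}, completing the lemma.

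The routine obstacle is the algebraic simplification of $J_{11}$ at $u=1$; I would carefully collect the polynomial terms in $A$ and $M$ (noting that several cancel since $u=1$ kills factors of $(1-u)$ in the original nullcline) to confirm the sign of $\lambda_1$. The only \emph{conceptual} care needed is the non-hyperbolic origin: because the linearisation vanishes identically, I cannot invoke the Hartman--Grobman theorem, and the saddle classification must rest on the invariant-axis argument above rather than on eigenvalues. I would phrase this explicitly so the reader understands why $(0,0)$ is labelled ``non-hyperbolic saddle'' while $(1,0)$ is simply a ``saddle''.
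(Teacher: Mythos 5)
Your treatment of $(1,0)$ is correct and is essentially the paper's argument in a different guise: the Jacobian at $(1,0)$ is indeed triangular, $J_{11}\vert_{(1,0)}$ does collapse to $(1+A)(1-M)$, and your eigenvalues $\lambda_1=-(1+A)(1-M)<0$, $\lambda_2=S(A+1)>0$ reproduce the paper's one-line observation that $\det\left(J\left(1,0\right)\right)=-S(1-M)(A+1)^2<0$. Either way, $(1,0)$ is a hyperbolic saddle.

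The genuine gap is in your classification of the origin. You correctly note that $J(0,0)$ is the zero matrix and that linearisation therefore says nothing, and your computations of the flow on the invariant axes (repelling along the positive $u$-axis, attracting along the positive $v$-axis) are right. But the inference ``one attracting invariant direction plus one repelling invariant direction is exactly the signature of a saddle'' is only valid for hyperbolic equilibria. For a fully degenerate equilibrium, the behaviour \emph{on} the invariant axes does not constrain the behaviour of \emph{interior} orbits, and it is precisely the interior behaviour that the word ``saddle'' is asserting something about: a degenerate point can carry parabolic or elliptic sectors wedged between the axes. Concretely, consider $\dot u = u(u-2v)$, $\dot v = v(u-v)$: both axes are invariant, the linearisation at the origin vanishes, the flow on the positive $u$-axis ($\dot u = u^2$) repels and on the positive $v$-axis ($\dot v = -v^2$) attracts, yet the region $\{v>u>0\}$ is forward invariant and every orbit in it has both coordinates strictly decreasing, hence converges to the origin. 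So this origin has an attracting parabolic sector in the open first quadrant and is not a saddle, even though it satisfies your criterion; your argument cannot distinguish it from \eqref{eq05}. This is exactly why the paper desingularises the origin: it applies the vertical blow-up $\left(u,v\right)\to\left(xy,y\right)$ with the time rescaling $\tau\to t/y$, obtains system \eqref{eq12}, shows that the equilibria $O_{xy}$ and (when $S>|M|$) $I_x=\left(S/(S+M),0\right)$ on the exceptional line are \emph{hyperbolic} saddles, and then blows down to transfer this hyperbolic information to the origin. That analysis also reveals structure your argument is blind to — an unstable separatrix $\Gamma^u$ of the origin entering the interior of the first quadrant, and a case distinction according to $S>|M|$, $S=|M|$, $S<|M|$. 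To repair your proof you would need to carry out this blow-up (or an equivalent desingularisation); the axes computation alone does not establish the lemma.
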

\begin{proof}
Since $\det\left(J\left(1,0\right)\right)=-S\left(1-M\right)\left(A+1\right)^2<0$ it immediately follows that $(1,0)$ is a saddle point. To prove the results for the origin we follow the methodology used to desingularise the origin showed in~\cite[Lemma~2]{arancibia3}. First, we observe that setting $u=0$ in system~\eqref{eq05} the second equation reduces to $dv/dt=-v^2\left(SA\right)<0$ for $v\neq0$. That is, any trajectory starting along the $v$-axis converges to the origin $\left(0,0\right)$. Also, the Jacobian matrix, $J\left(0,0\right)$, is the zero matrix. Hence, the origin $\left(0,0\right)$ is a non-hyperbolic equilibrium of system~\eqref{eq05}. Since the horizontal blow-up in~\eqref{eq05} does not give any further information we omit the details and only consider the vertical method to desingularise the origin and study the dynamics of this equilibrium. The vertical \emph{blow-up} given by the transformation and the time rescaling
\begin{equation}
	\left(u,v\right)\to \left(xy,y\right)~\text{and}~\tau \to \dfrac{t}{y},\label{eq10}
	\end{equation}
respectively. This transformation is well-defined for all values of $u$ and $v$ except for $v=0$ and `blows-up'  the origin of system~\eqref{eq05} into the entire $x$-axis. Our goal is to analyse the equilibrium on the positive half axis $x\geq0$, $y=0$ of the transformed system, which is given by:
	\begin{equation}\label{eq12}
	\begin{aligned}
	\dfrac{dx}{dt}&=x\left(S\left(1-x\right)\left(A+xy\right)+x\left(M-xy\right)\left(xy-1\right)\left(A+xy\right)-Qxy\right),\\
	\dfrac{dy}{dt}&=Sy\left(x-1\right)\left(xy+A\right).
	\end{aligned}
	\end{equation}
System~\eqref{eq12} has up to two equilibria on the non negative $x$-axis of the form $\left(x,0\right)$ with $x\geq 0$. The origin $O_{xy}=\left(0,0\right)$ and a second equilibrium $I_x=\left(\mu,0\right)$ with $\mu=S/\left(S+M\right)$ if $S>|M|$. Their corresponding Jacobian matrix $J_*$ evaluated at $O_{xy}$ and $I_x$ are:
	\begin{equation*}
	J_*\left(O_{xy}\right)=\begin{pmatrix} AS & 0 \\ 0 & -AS \end{pmatrix}
	\end{equation*}
with eigenvalues $\lambda_1\left(O_{xy}\right)=AS$ and $\lambda_2\left(O_{xy}\right)=-AS$ and
	\begin{equation*}
	J_*\left(I_x\right)=\begin{pmatrix} -AS & \dfrac{S^2\left(AS\left(1+M\right)-Q\left(M+S\right)\right)}{\left(M+S\right)^3}\\ & \\ 0 &- \dfrac{AMS}{M+S}\end{pmatrix}
	\end{equation*}
with eigenvalues $\lambda_1\left(I_x\right)=-AS$ and $\lambda_2\left(I_x\right)=-AMS/\left(M+S\right)>0$ since $S>|M|$. It follows that both $O_{xy}$ and $I_x$ are saddle points in system~\eqref{eq12}. Moreover, a branch of the unstable  manifold $W^u\left(I_x\right)$  of the equilibrium $I_x$ is in the half-plane $y>0$, as illustrated in the left panel of Figure~\ref{F03}. Furthermore, the other local invariant curves are the axes $x=0$ and $y=0$. Hence, taking the  inverse of~\eqref{eq10}, the line $y=0$, including the point $I_x$, collapses to the origin $O_{uv}$ of~\eqref{eq05}, the line $x=0$ is mapped to $u=0$ and, $W^u\left(I_x\right)$ is locally mapped to the curve $\Gamma^u$, the unstable manifold of $O_{uv}$.  Both curves are locally represented by the eigenvectors associated to the positive eigenvalues tangent to the curves at $I_x$ and $O_{uv}$, respectively. Since the orientation of the orbits in the first quadrant is preserved by ~\eqref{eq10}, it follows that the origin $O=\left(0,0\right)$ is a local saddle of~\eqref{eq05}. The qualitative dynamics in a neighbourhood of the origin $O_{uv}$ in~\eqref{eq05} is illustrated in the right panel of Figure~\ref{F03}. If $S=|M|$, then $I_x$ collapses to the origin $O_{xy}$ and if $S<|M|$, then the equilibrium $I_x$ is in the half-plane $y<0$ which is outside of $\Phi$~\eqref{phi}. So,~\eqref{eq12} has one non negative equilibrium $(0,0)$ which still a saddle. 
\end{proof}

\begin{figure} 
\centering
\begin{subfigure}[b]{0.35\textwidth}
\centering
\includegraphics[width=6.2cm]{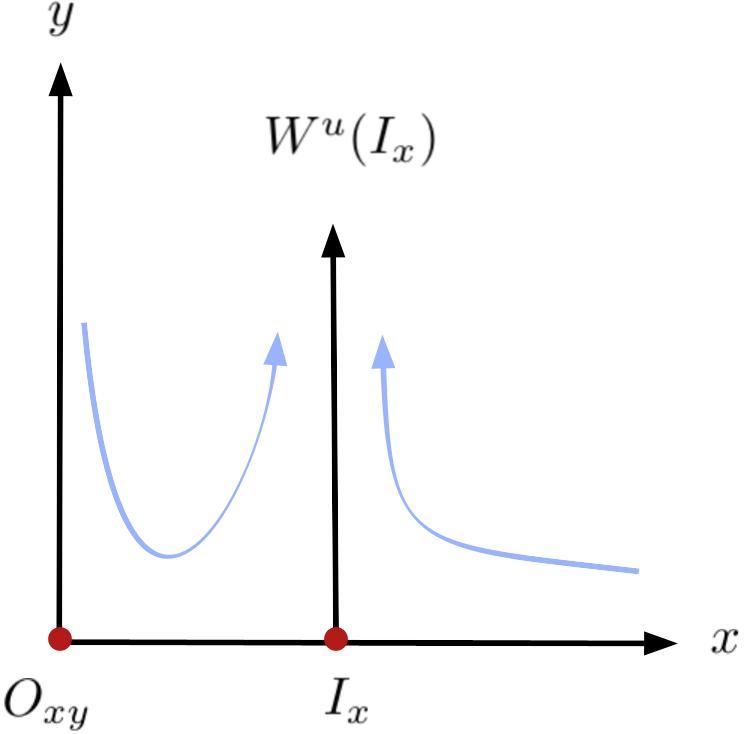}
\caption{Blow-up}\label{F03_a}
\end{subfigure}
\begin{subfigure}[b]{0.35\textwidth}\centering
\includegraphics[width=6cm]{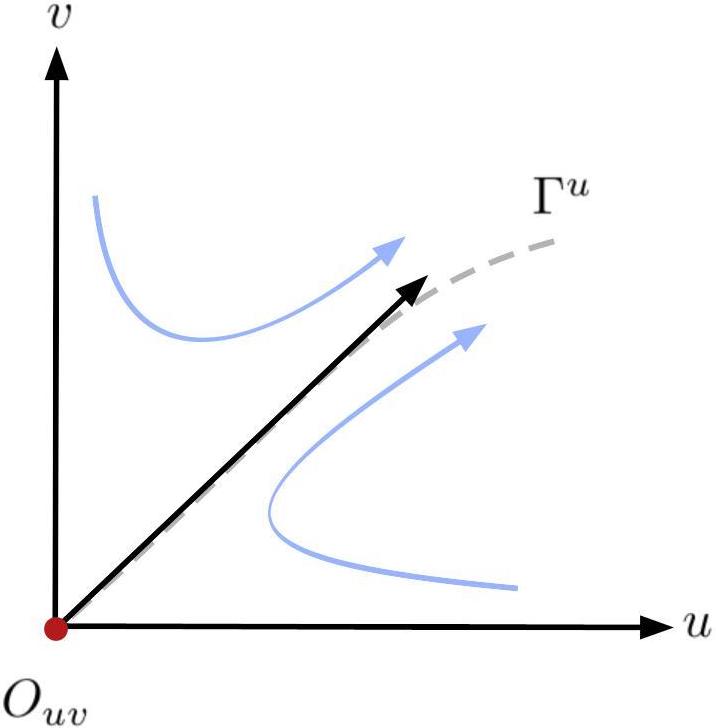}
\caption{Blow-down}\label{F03_b}
\end{subfigure}
\caption{Diagram of the horizontal blow-up and blow-down in a neighbourhood of the origin $(0,0)$.}\label{F03}
\end{figure}

Next, we consider the stability of the positive equilibria of system~\eqref{eq05}. Note that these equilibria are the intersection of the nullcline $u=v$ such that $\left(u+A\right)\left(1-u\right)\left(u-M\right)=Qu$. Therefore, the Jacobian matrix of system~\eqref{eq05} becomes
\begin{equation}\label{eq13}
\begin{aligned}
J\left(u,u\right)=\begin{pmatrix}
u^2\left(\left(1-u\right)\left(u-M\right)-\left(u+A\right)\left(u-M\right)+\left(1-u\right)\left(u+A\right)\right) & -Qu^2 \\ 
Su\left( A+u\right)  &  -Su\left( A+u\right) 
\end{pmatrix}.
\end{aligned}
\end{equation}
The determinant and the trace of the Jacobian matrix~\eqref{eq13} are:\\
\begin{equation}\label{eq14}
\begin{aligned}
\det\left(J\left(u,u\right)\right) &=Su^2\left(A+u\right)\left(u^2\left(2u-T\left(A,M\right)\right)-AM\right),\\
\tr\left(J\left(u,u\right)\right) &=u\left(\left(\left(1-u\right)\left(u-M\right)+\left(u+A\right)\left(1-2u+M\right)\right)u-S\left(A+u\right)\right).
\end{aligned}
\end{equation}
So, the parity of the determinant~\eqref{eq14} depends on the sign of $u^2\left(2u-T\left(A,M\right)\right)-AM $ and the parity of the trace~\eqref{eq14} depends on the sign of
\begin{equation}\label{s_tr}
\left(\left(1-u\right)\left(u-M\right)+\left(u+A\right)\left(1-2u+M\right)\right)u-S\left(A+u\right).
\end{equation}
This instantly yields the following result:
\begin{lemm}\label{L01new}
A positive equilibrium $P=(\tilde{u},\tilde{u})$ of~\eqref{eq05} will be
\begin{enumerate}[label=(\roman*)]
\item \label{L01newa} a saddle point if $(\tilde{u})^2\left(2\tilde{u}-T\left(A,M\right)\right)-AM < 0$;
\item \label{L01newb} a repeller if $(\tilde{u})^2\left(2\tilde{u}-T\left(A,M\right)\right)-AM > 0$ and $S<\dfrac{\tilde{u}\left(\left(1-\tilde{u}\right)\left(\tilde{u}-M\right)+\left(\tilde{u}+A\right)\left(1-2\tilde{u}+M\right)\right)}{\left(A+\tilde{u}\right)}=:f(\tilde{u})$; and
\item \label{L01newc} an attractor if $(\tilde{u})^2\left(2\tilde{u}-T\left(A,M\right)\right)-AM > 0$ and $S>\dfrac{\tilde{u}\left(\left(1-\tilde{u}\right)\left(\tilde{u}-M\right)+\left(\tilde{u}+A\right)\left(1-2\tilde{u}+M\right)\right)}{\left(A+\tilde{u}\right)}$.\\
\end{enumerate}
\end{lemm}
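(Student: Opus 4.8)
The plan is to read the stability classification directly off the trace--determinant criterion for planar systems, applied to the Jacobian $J(u,u)$ in~\eqref{eq13} whose determinant and trace are already recorded in~\eqref{eq14}. Since a positive equilibrium $P=(\tilde{u},\tilde{u})$ has $\tilde{u}>0$ and $A+\tilde{u}>0$ (recall $A\in(0,1)$), while $S>0$, the prefactor $S\tilde{u}^2(A+\tilde{u})$ in $\det\left(J(\tilde{u},\tilde{u})\right)$ is strictly positive. Hence the sign of the determinant is governed entirely by the factor $\tilde{u}^2\left(2\tilde{u}-T\left(A,M\right)\right)-AM$, exactly as observed in the text preceding the lemma, and this is the sole quantity I would track for the determinant.

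First I would dispose of case~\ref{L01newa}. If $\tilde{u}^2\left(2\tilde{u}-T\left(A,M\right)\right)-AM<0$, then $\det\left(J(\tilde{u},\tilde{u})\right)<0$, so the two eigenvalues of $J(\tilde{u},\tilde{u})$ are real and of opposite sign, making $P$ a saddle point. No further computation is needed here.

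Next I would treat cases~\ref{L01newb} and~\ref{L01newc} together. The hypothesis $\tilde{u}^2\left(2\tilde{u}-T\left(A,M\right)\right)-AM>0$ forces $\det\left(J(\tilde{u},\tilde{u})\right)>0$, so the equilibrium is either a node or a focus and its stability is decided solely by the sign of the trace. Since $\tilde{u}>0$, the sign of $\tr\left(J(\tilde{u},\tilde{u})\right)$ coincides with the sign of the expression~\eqref{s_tr} evaluated at $u=\tilde{u}$; dividing that expression by the positive quantity $A+\tilde{u}$ shows it is positive exactly when $S<f(\tilde{u})$ and negative exactly when $S>f(\tilde{u})$, with $f$ as in the statement. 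A positive determinant together with a positive trace yields eigenvalues with positive real part (a repeller), establishing~\ref{L01newb}; a positive determinant together with a negative trace yields eigenvalues with negative real part (an attractor), establishing~\ref{L01newc}.

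The argument is essentially sign bookkeeping, so I do not anticipate a genuine obstacle; the only point requiring mild care is confirming positivity of all prefactors ($S$, $\tilde{u}^2$, $A+\tilde{u}$) so that the determinant and trace inequalities translate faithfully into the stated conditions on $\tilde{u}$ and $S$, and observing that the node/focus distinction is immaterial for an attractor/repeller classification. The strict inequalities in the hypotheses keep us away from the degenerate boundaries $\det=0$ and $\tr=0$, where a finer analysis would be required; those borderline cases are precisely the saddle-node and Hopf scenarios deferred to Section~\ref{BA}.
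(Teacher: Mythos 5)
Your proposal is correct and matches the paper's own reasoning: the paper states this lemma as following ``instantly'' from the sign observations on $\det\left(J(u,u)\right)$ and $\tr\left(J(u,u)\right)$ in~\eqref{eq14} and~\eqref{s_tr}, which is exactly the trace--determinant bookkeeping you carry out (positive prefactors $S\tilde{u}^2(A+\tilde{u})$, negative determinant giving a saddle, and the trace sign reducing to $S$ versus $f(\tilde{u})$ after division by $A+\tilde{u}>0$). No gap; your explicit remarks on the node/focus distinction and the excluded degenerate boundaries are consistent with, if slightly more detailed than, the paper's treatment.
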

\begin{corr}\label{Cnew}
If $T(A,M)^3<-27AM$, then a positive equilibrium $P=(\tilde{u},\tilde{u})$ of~\eqref{eq05} is not a saddle. If for a positive equilibrium $P=(\tilde{u},\tilde{u})$ we have that $\tilde{u}> T(A,M)+\sqrt{T(A,M)^2+3(A-M+AM)}$, then this equilibrium is not a repeller.
\end{corr}
\begin{proof}
The first statement follows directly from the observation that $(\tilde{u})^2\left(2\tilde{u}-T\left(A,M\right)\right)-AM$ is minimal for nonnegative $\tilde{u}$ at $\tilde{u}=\hat{u}:=\max\{0,T(A,M)/3\}$. At this point $(\tilde{u})^2\left(2\tilde{u}-T\left(A,M\right)\right)-AM$ simplifies to $\min\{-AM, -T(A,M)^3/27-AM\}$. Hence,  $(\tilde{u})^2\left(2\tilde{u}-T\left(A,M\right)\right)-AM>0$ for all $\tilde{u}$ if $T(A,M)^3<-27AM$.
The second statement follows directly from the observation that for $\tilde{u} > T(A,M)+\sqrt{T(A,M)^2+3(A-M+AM)}$ we have that $f(\tilde{u})<0$. 
\end{proof}

In addition, $f(\tilde{u})$ has a maximum for positive $\tilde{u}$. Hence, a value for $S$ larger than this maximum again yields that the associated equilibrium cannot be a repeller.
For given $A$ and $M$ this maximum value can be explicitly computed. For instance, this maximum value is $361/1200$ for $A=1/10$ and $M=-1/10$.   

In the case that there is only one positive equilibrium, Lemma~\ref{L01new} simplifies to the following.
\begin{corr}\label{C01}
Let the system parameters of~\eqref{eq05} be such that the conditions of case~\ref{NEP1} or case~\ref{NEP2}\ref{NEP2a} of Lemma~\ref{NEP} are met. Then, system~\eqref{eq05} has only one positive equilibrium $P=(u^*,u^*)$ which is a repeller or an attractor.
If the positive equilibrium is a repeller, then it is surrounded by a stable limit cycle.
\end{corr}
\begin{figure} 
\centering
\begin{subfigure}[b]{0.48\textwidth}\centering
\includegraphics[width=8cm]{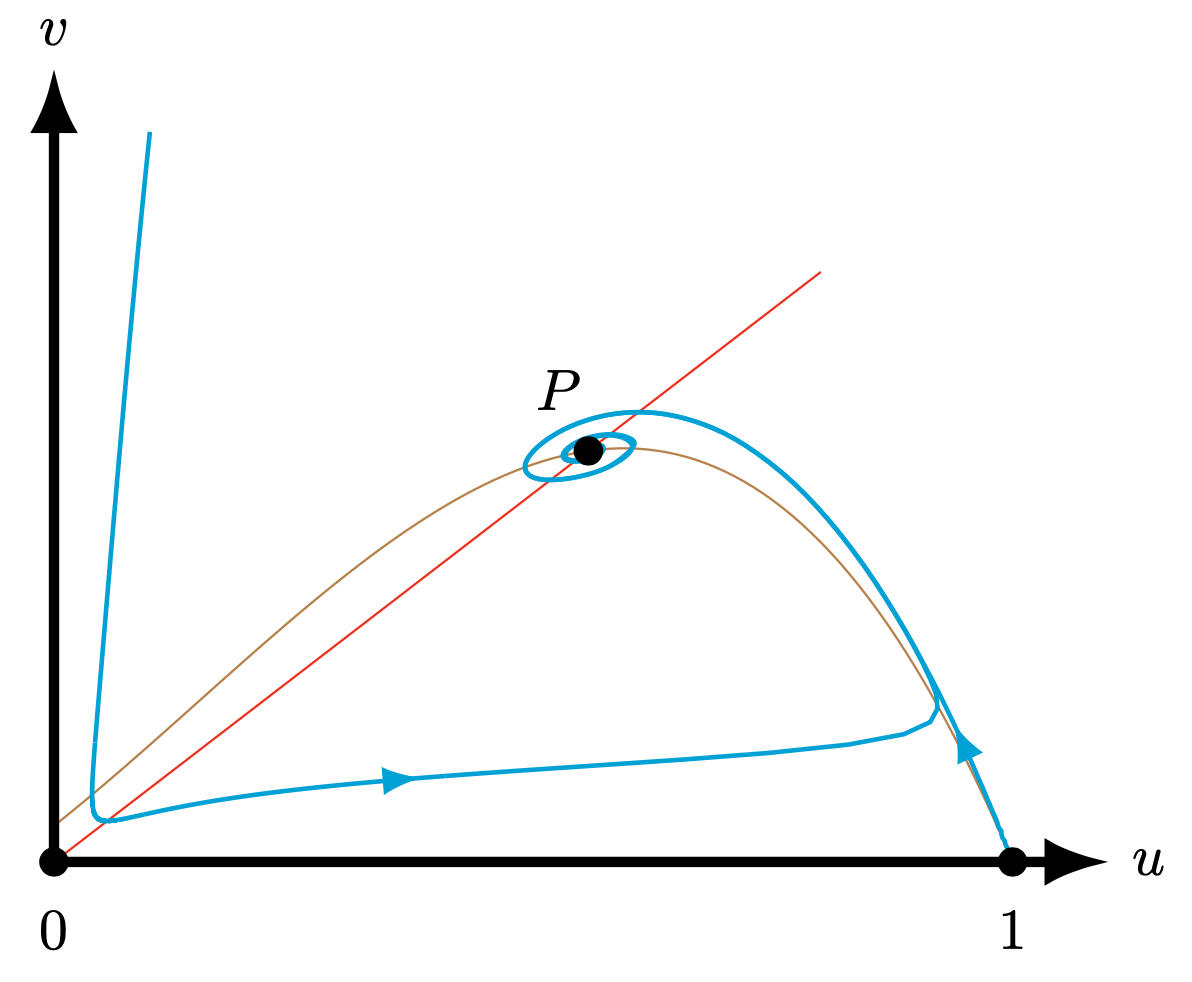}
\caption{If $S=0.1$, then $P$ is an attractor.\\~}\label{F04_a}
\end{subfigure}
\hfill
\begin{subfigure}[b]{0.48\textwidth}\centering
\includegraphics[width=8cm]{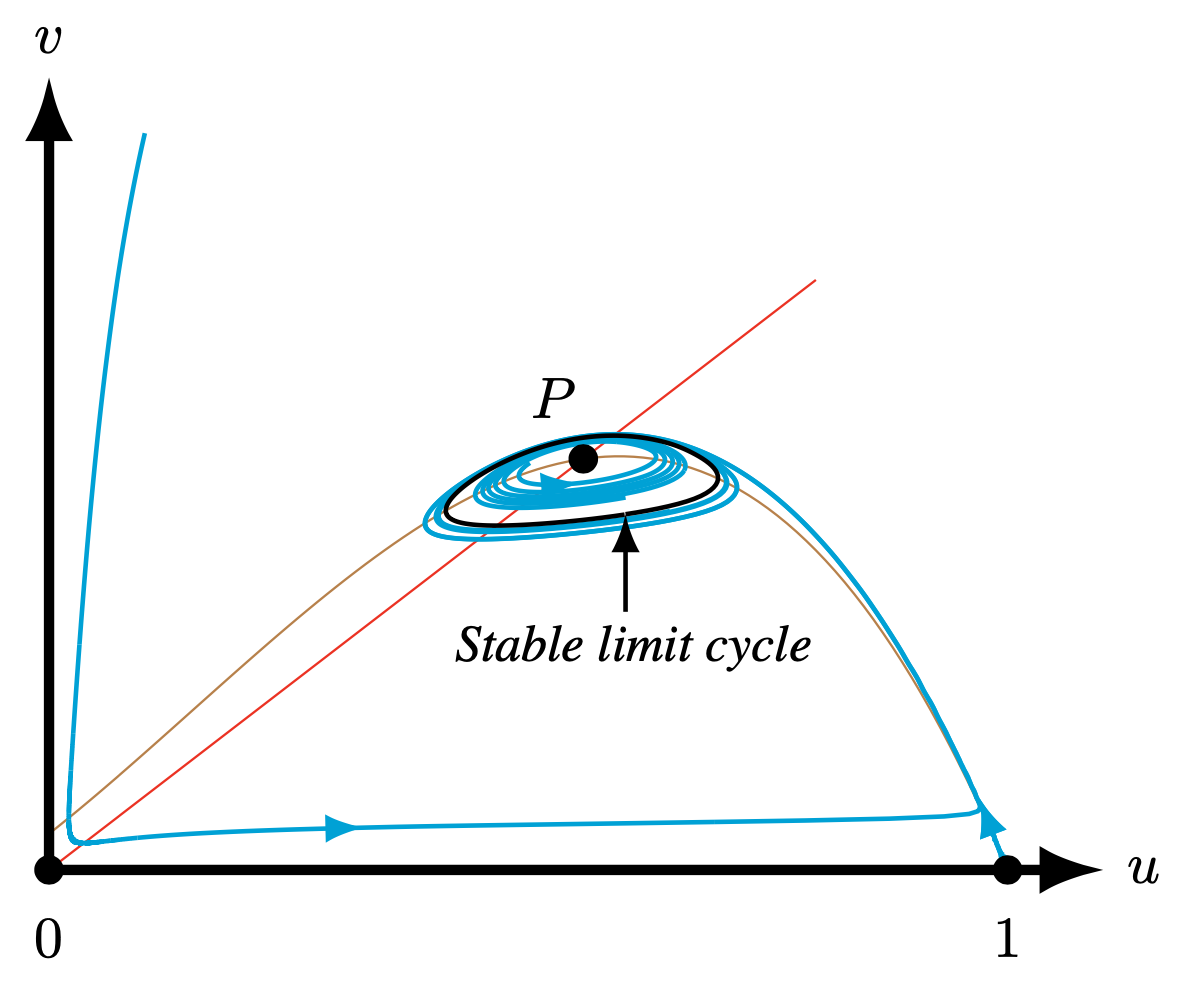}
\caption{If $S=0.045$, then $P$ is a repeller surrounded by a stable limit cycle.}\label{F04_b}
\end{subfigure}
\caption{For $A=0.5$, $M=-0.05$ and $Q=0.51$, such that $T(0.5,-0.05)>0$ and $L(0.5,-0.05,0.51)>0$, i.e. case \ref{NEP1} of Lemma \ref{NEP}, system~\eqref{eq05} has one positive equilibrium $P$. This equilibrium can be an attractor (left panel) or a repeller (right panel), see Corollary~\ref{C01}. In the latter case, the equilibrium is necessarily surrounded by a stable limit cycle. The brown (red) curve represents the predator (prey) nullcline.}
\label{F04}
\end{figure}
\begin{proof} 
This result follows directly from the observation that $\Phi$ \eqref{phi} forms a bounding box and the fact that
the equilibria $\left(0,0\right)$ and $\left(1,0\right)$ are (non-hyperbolic) saddle points, see Lemma~\ref{TEST}.
\end{proof}
Examples of Corollary~\ref{C01} are shown in Figure~\ref{F04}. 

In the case that there are three distinct positive equilibria, Lemma~\ref{L01new} simplifies to the following.
\begin{corr}\label{C02}
Let the system parameters of~\eqref{eq05} be such that the conditions of case~\ref{NEP2}\ref{NEP2b} of Lemma~\ref{NEP} are met. Then, system~\eqref{eq05} has 
three positive equilibria $P_1=(u^*,u^*), P_2=(u^*_-,u^*_-)$ and $P_3=(u^*_+,u^*_+)$. 
If the three equilibria are distinct, then the middle equilibrium is a saddle point,  while the outer two equilibria are a repeller or an attractor. If both outer equilibria are repellers, then there is a stable limit cycle surrounding the equilibria. \end{corr}
\begin{proof}
Recall that we assumed, without loss of generality, $0<u^*<u^*_-<u^*_+<1$, that is, $P_2$ is the middle equilibrium. Now the first two results immediately follow from Lemma~\ref{L01new}, the conditions of case~\ref{NEP2}\ref{NEP2b} of Lemma~\ref{NEP} and the expressions for $u^*_\pm$ in terms of $u^*$ \eqref{eq08}. In particular, for the middle equilibrium $P_2=(u^*_-,u^*_-)$ we have
$$(u_-^*)^2\left(2u_-^*-T\left(A,M\right)\right)-AM=\dfrac{\sqrt{\Delta}}{4}\left(-1-A-M+3u^*+\sqrt{\Delta}\right)
=u_-^*\sqrt{\Delta}\left(u^*-u^*_-\right) <0\,.
$$
Similarly, for $P_3=(u^*_+,u^*_+)$ we get
$$
(u^*_+)^2\left(2u^*_+-T\left(A,M\right)\right)-AM=\dfrac{\sqrt{\Delta}}{4}\left(-T\left(A,M\right)+3u^*-\sqrt{\Delta}\right)
=u^*_+\sqrt{\Delta}\left(u^*_+-u^*\right)>0. 
$$

To show that $P_1$ cannot be a saddle point, we recall that the number of equilibria as discussed in Lemma~\ref{NEP}, and their stability, is independent of our initially picked root $u^*$. Therefore, assume that initially picked root $\tilde{u}^*$ in Lemma~\ref{NEP} is the largest root (where we added the \textasciitilde \, to make the distinction with the previous choice). Now, $0<\tilde{u}^*_-<\tilde{u}^*_+<\tilde{u}^*<1$ and we have that 
$$(\tilde{u}_-^*)^2\left(2\tilde{u}_-^*-T\left(A,M\right)\right)-AM=\dfrac{\sqrt{\Delta}}{4}\left(-1-A-M+3\tilde{u}^*+\sqrt{\Delta}\right)
=\tilde{u}_-^*\sqrt{\Delta}\left(\tilde{u}^*-\tilde{u}^*_-\right) >0\,.
$$
Hence, also $P_1$ cannot be a saddle point.

The final statement again follows directly from the observation that $\Phi$ \eqref{phi} forms a bounding box and the fact that
the equilibria $\left(0,0\right)$ and $\left(1,0\right)$ are (non-hyperbolic) saddle points, see Lemma~\ref{TEST}.
\end{proof}

Examples of Corollary~\ref{C02} are shown in Figure~\ref{F05}. 

\begin{figure} 
\centering
\begin{subfigure}[b]{0.48\textwidth}\centering
\includegraphics[width=8.25cm]{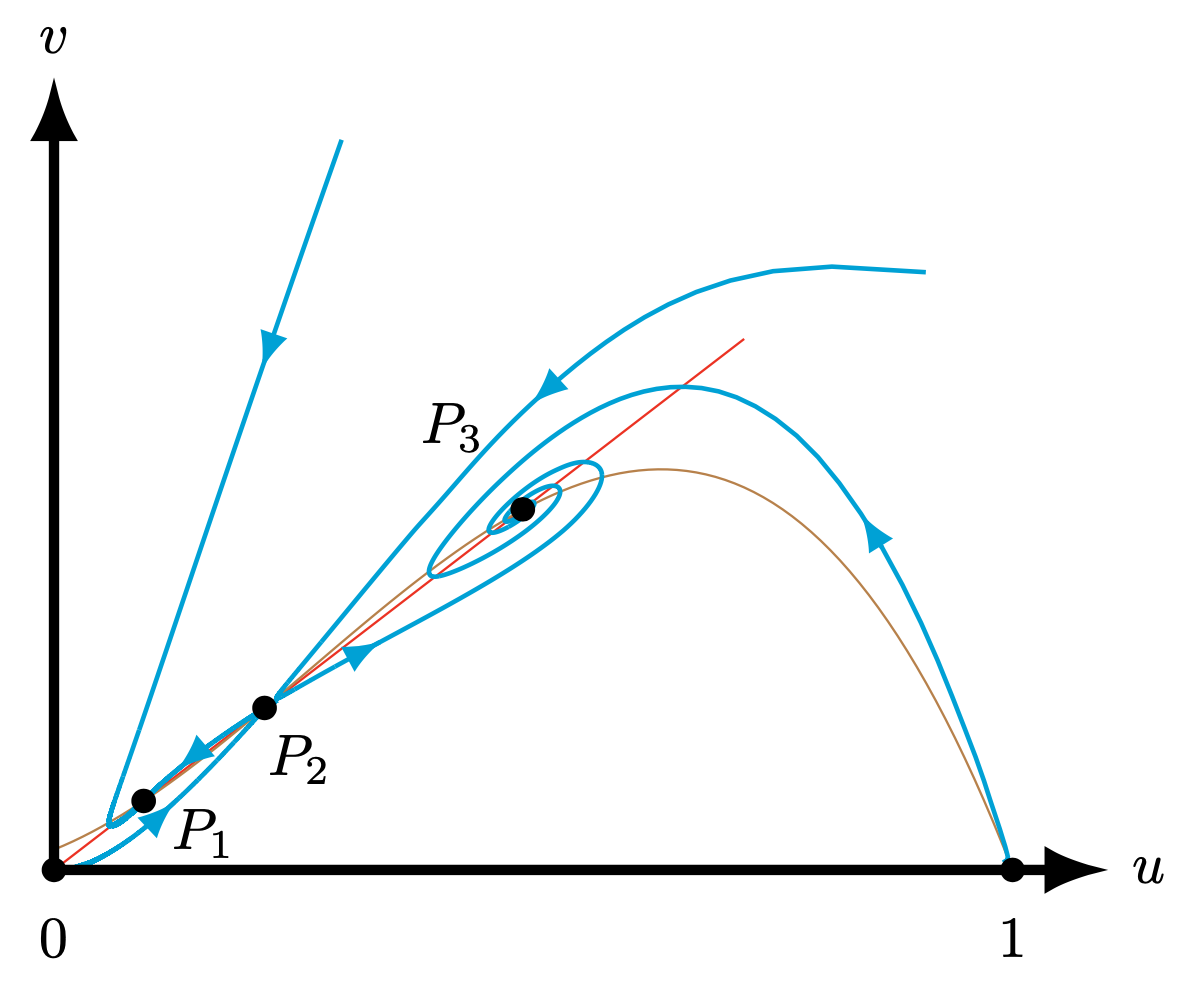}
\caption{For $S=0.3$ the equilibria $P_{1,3}$ are attractors.\\~}\label{F05_a}
\end{subfigure}
\hfill
\begin{subfigure}[b]{0.48\textwidth}\centering
\includegraphics[width=8.25cm]{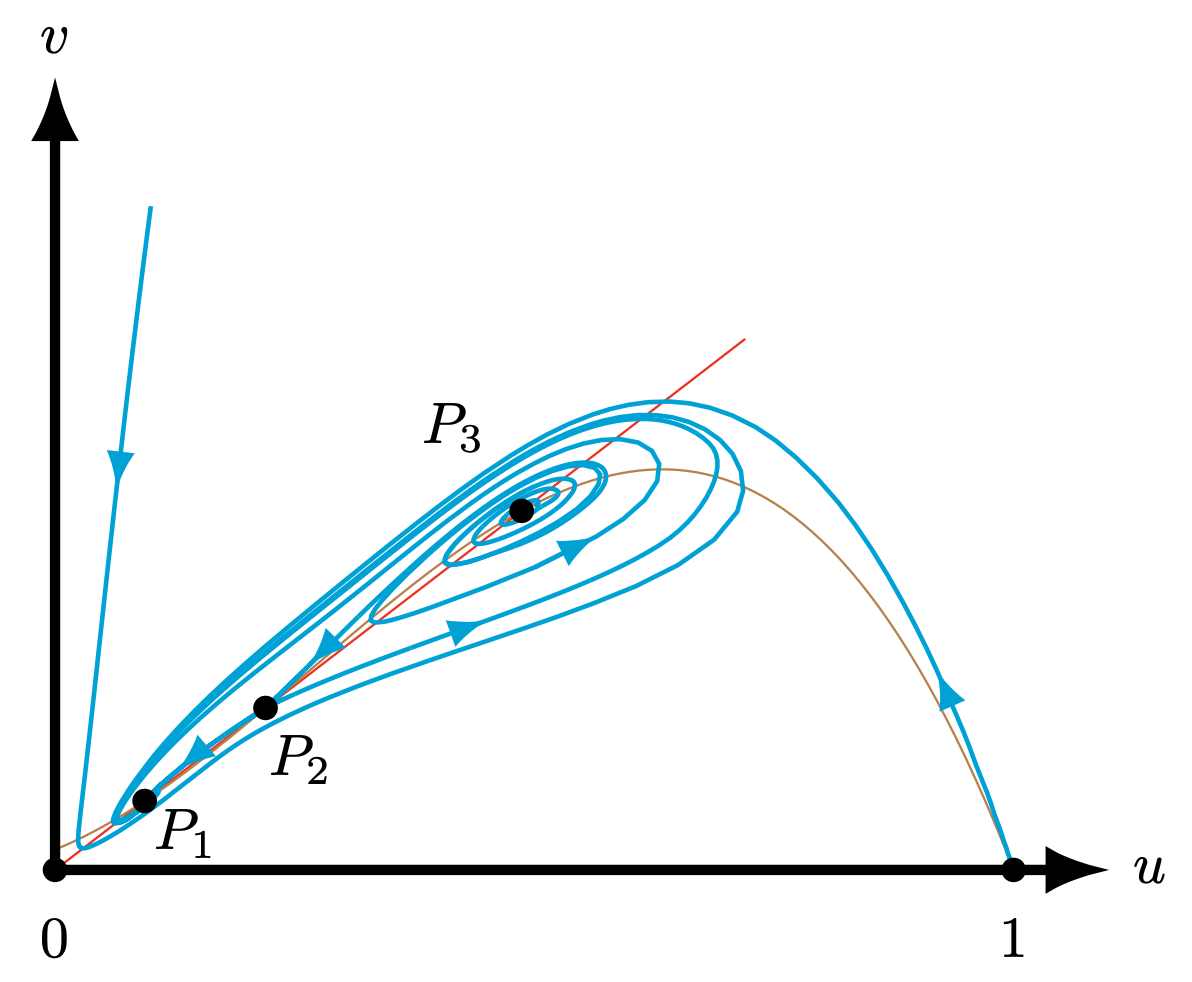}
\caption{For $S=0.2$ the equilibria $P_1$ is an attractor and $P_3$ is a repeller.\\}\label{F05_b}
\end{subfigure}
\hfill
\begin{subfigure}[b]{0.48\textwidth}\centering
\includegraphics[width=8.25cm]{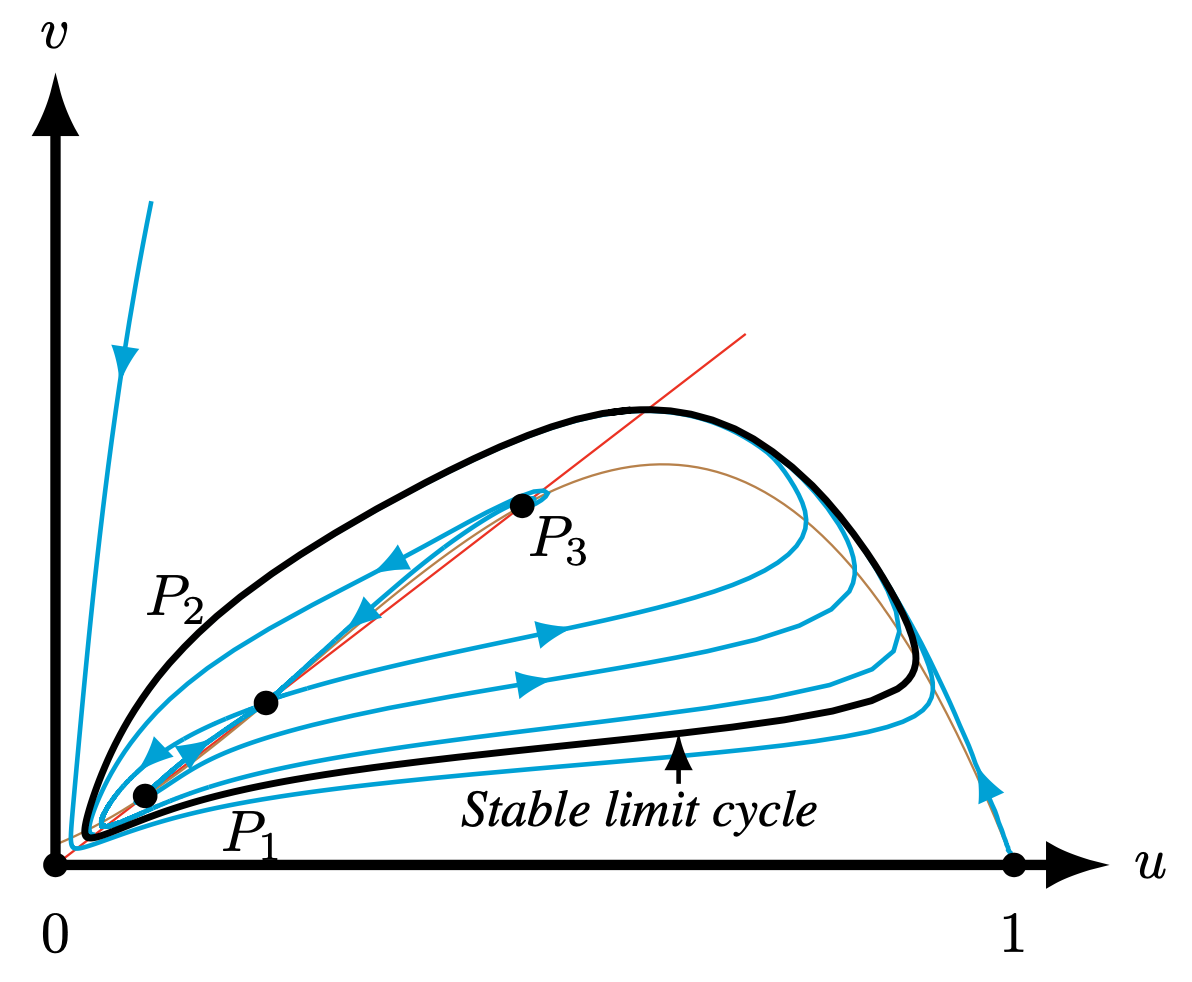}
\caption{For $S=0.13$ the equilibria $P_{1,3}$ are repellers. The equilibria are surrounded by a stable limit cycle.}\label{F05_c}
\end{subfigure}
\caption{Phase planes of system~\eqref{eq05} for $A=0.1$, $M=-0.1$ and $Q=0.363$, such that system~\eqref{eq05} has three positive equilibria $P$, for varying $S$ (Corollary~\ref{C02}). The equilibria $(0,0)$, $(1,0)$ and $P_2$ are always saddles. The brown (red) curve represents the predator (prey) nullcline.}
\label{F05}
\end{figure}

To conclude, we discuss the cases when two of the equilibria collapse, see Figure~\ref{F02} and~\ref{F06}. Since we assumed, without loss of generality, that $0<u^*\leq u^*_-\leq u^*_+<1$, either $P_1 = (u^*,u^*)$ collapses with $P_2=(u^*_-,u^*_-)$ or $P_2$ collapses with $P_3=(u^*_+,u^*_+)$. We obtain the following result for the colliding equilibria.
\begin{corr}\label{C03}
Let the system parameters of~\eqref{eq05} be such that the conditions of case~\ref{NEP2}\ref{NEP2b} of Lemma~\ref{NEP} are met.
\begin{enumerate}[label=(\Roman*)]
\item If $P_1$ collides with $P_2$, then this equilibrium point $P_1=P_2$ of multiplicity two is
	\begin{enumerate}[label=(\roman*)]
	\item \label{C03a} an unstable saddle-node if $S<\dfrac{Qu^*}{A+u^*}$,
	\item \label{C03b} a stable saddle-node if $S>\dfrac{Qu^*}{A+u^*}$.
	\end{enumerate}
\item If $P_2$ collides with $P_3$, then this equilibrium point $P_2=P_3$ of multiplicity two is
	\begin{enumerate}[label=(\roman*)]
	\item \label{C03a} an unstable saddle-node if $S<\dfrac{Q\left(T\left(A,M\right)-u^*\right)}{1+A+M-u^*}$,
	\item \label{C03b} a stable saddle-node if $S>\dfrac{Q\left(T\left(A,M\right)-u^*\right)}{1+A+M-u^*}$.
	\end{enumerate}
		\end{enumerate} 
		\end{corr}
\begin{proof}
We focus on the case where $P_2$ collapses with $P_3$. The proof of the other case goes in a similar fashion and will be omitted. The equilibrium coincide when $\Delta=0$~\eqref{delta} and $u_-^*= u_+^* = \left(T\left(A,M\right)-u^*\right)/2$ . The Jacobian matrix~\eqref{eq13} reduces to 
	\begin{equation}\label{eq15}
	J\left(u_-^*,u_-^*\right)=\begin{pmatrix}
	Q(u_-^*)^2  & -Q(u_-^*)^2 \\ 
	Su_-^*\left( A+u_-^*\right)  &  -Su_-^*\left(A+u_-^*\right) 
	\end{pmatrix}.
	\end{equation}  
	So, as expected, $\det\left(J\left(u_-^*,u_-^*\right)\right)=0$ and the behaviour of the equilibrium depends on the value of the trace  
	$$\tr\left(J\left(u_-^*,u_-^*\right)\right) = Q(u_-^*)^2 -Su_-^*\left(A+u_-^*\right) 
	 \,. $$
Implementing $u_-^*= \left(T\left(A,M\right)-u^*\right)/2$ now gives the desired result. Note that since $u^*_- = u_+^* = \left(T\left(A,M\right)-u^*\right)/2 >0$ implies that $T(A,M)>u^*$. Consequently, $1+A+M-u^*> T(A,M)-u^*>0$. 
\end{proof}

\begin{figure} 
\centering
\begin{subfigure}[b]{0.48\textwidth}\centering
\includegraphics[width=8cm]{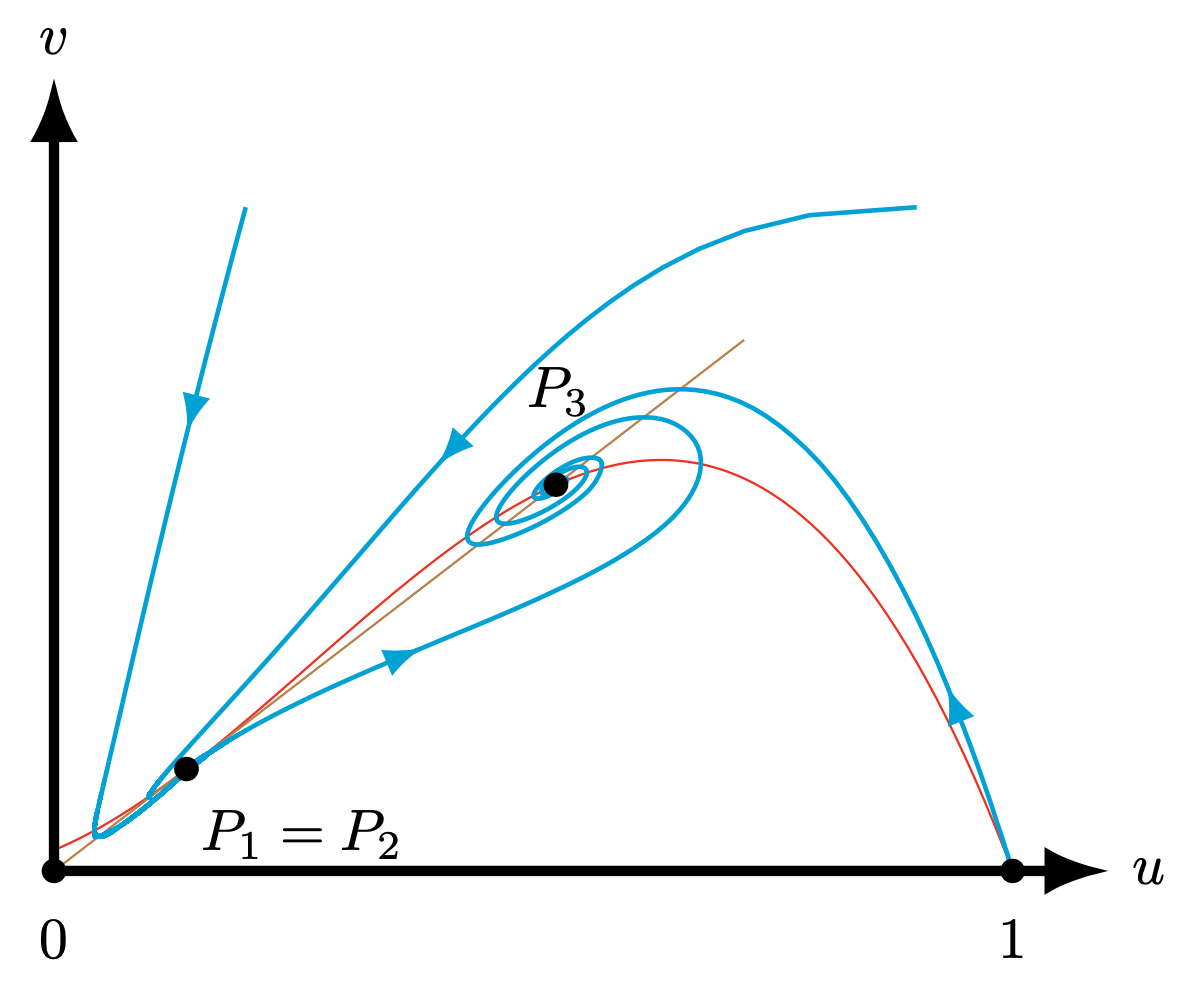}
\caption{For $Q=Q^-(0.1,-0.1) = (73-\sqrt5)/200$ the equilibria $P_1$ and $P_2$ coincide and form a stable saddle-node (Corollary~\ref{C03}\ref{C03a}), while the equilibrium $P_3$ is an attractor (Lemma~\ref{L01new}\ref{L01newb}).}\label{F06_a}
\end{subfigure}
\hfill
\begin{subfigure}[b]{0.48\textwidth}\centering
\includegraphics[width=8cm]{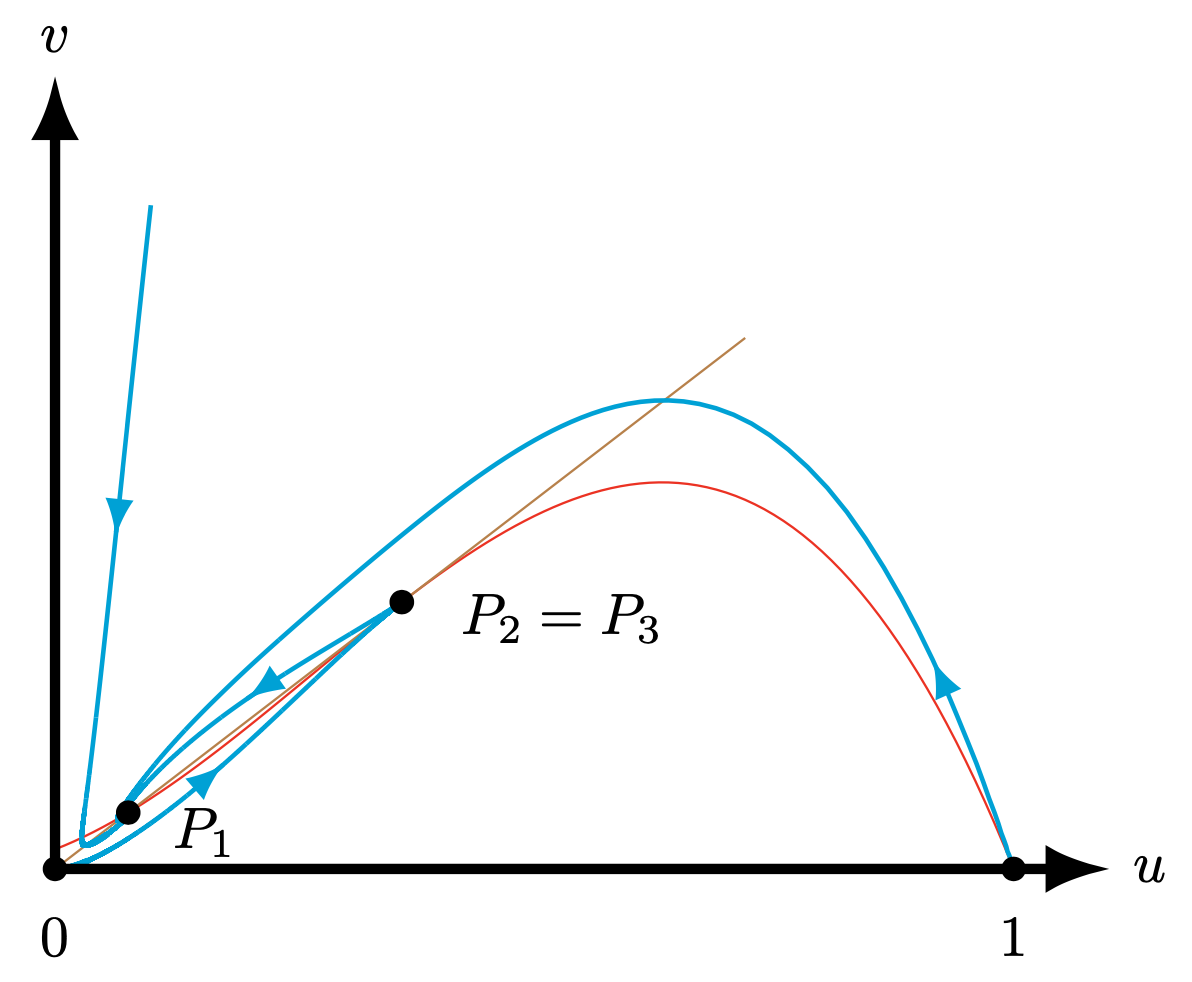}
\caption{For $Q=Q^+(0.1,-0.1) = (73+\sqrt5)/200$ the equilibria $P_2$ and $P_3$ coincide and form an unstable saddle-node (Corollary~\ref{C03}\ref{C03a}), while the equilibrium $P_1$ is an attractor (Lemma~\ref{L01new}\ref{L01newb}).}\label{F06_c}
\end{subfigure}
\caption{Phase planes of system~\eqref{eq05} for $A=0.1$, $M=-0.1$ and $S=0.25$ for varying $Q$. The equilibria $(0,0)$ and $(1,0)$ are always saddles. The brown (red) curve represents the predator (prey) nullcline.}
\label{F06}
\end{figure}

\begin{figure} 
\centering
\begin{subfigure}[b]{0.48\textwidth}\centering
\includegraphics[width=7cm]{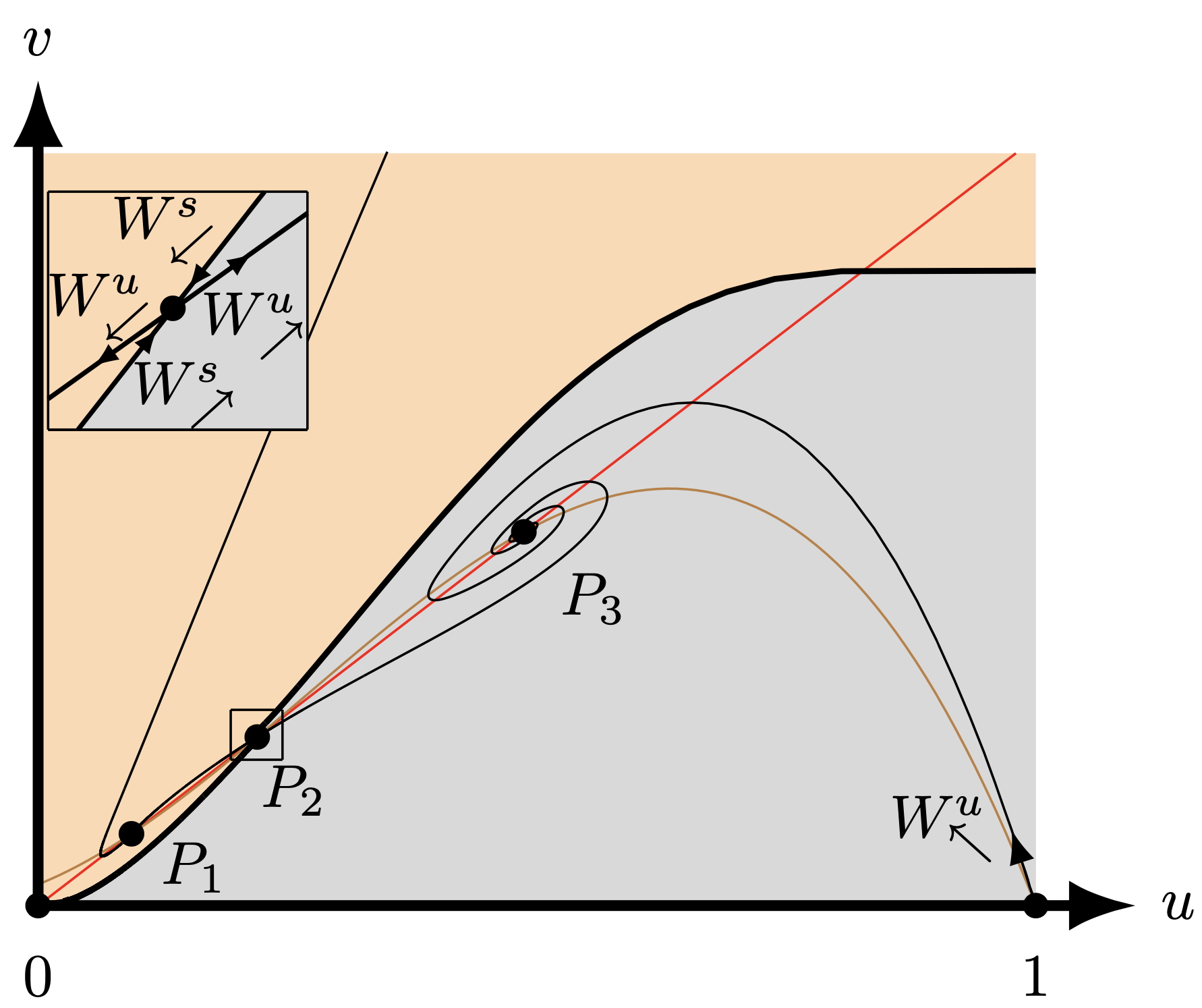}
\caption{For $S=0.3$ both $P_1$ and $P_3$ are attractors. The stable manifold of $P_2$ ($W^s$) forms a separatrix between the basin of attraction of $P_1$ (orange region) and the basin of attraction of $P_3$ (gray region) and $W^u_\nwarrow\left(1,0\right)$ connects to $P_3$.}\label{F09_a}
\end{subfigure}
\hfill
\begin{subfigure}[b]{0.48\textwidth}\centering
\includegraphics[width=7cm]{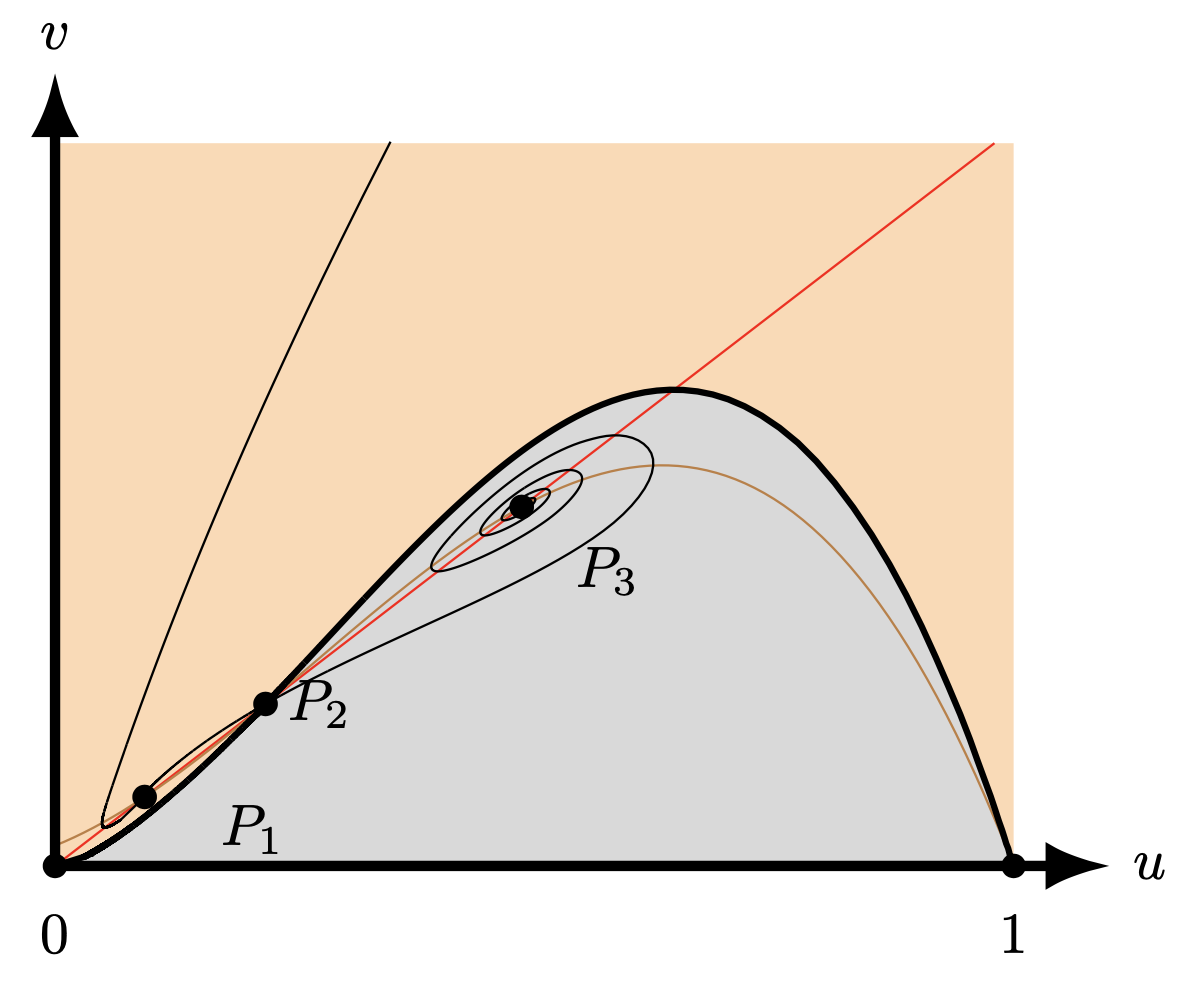}
\caption{For $S=0.24962827$ both $P_1$ and $P_3$ are still attractors and the stable manifold of $P_2$ still forms a separatrix between the two basins of attraction. However, the separatrix now connects to $W^u_\nwarrow\left(1,0\right)$ forming a heteroclinic curve between $P_2$ and $(1,0)$.}\label{F09_b}
\end{subfigure}
\hfill
\begin{subfigure}[b]{0.48\textwidth}\centering
\includegraphics[width=7cm]{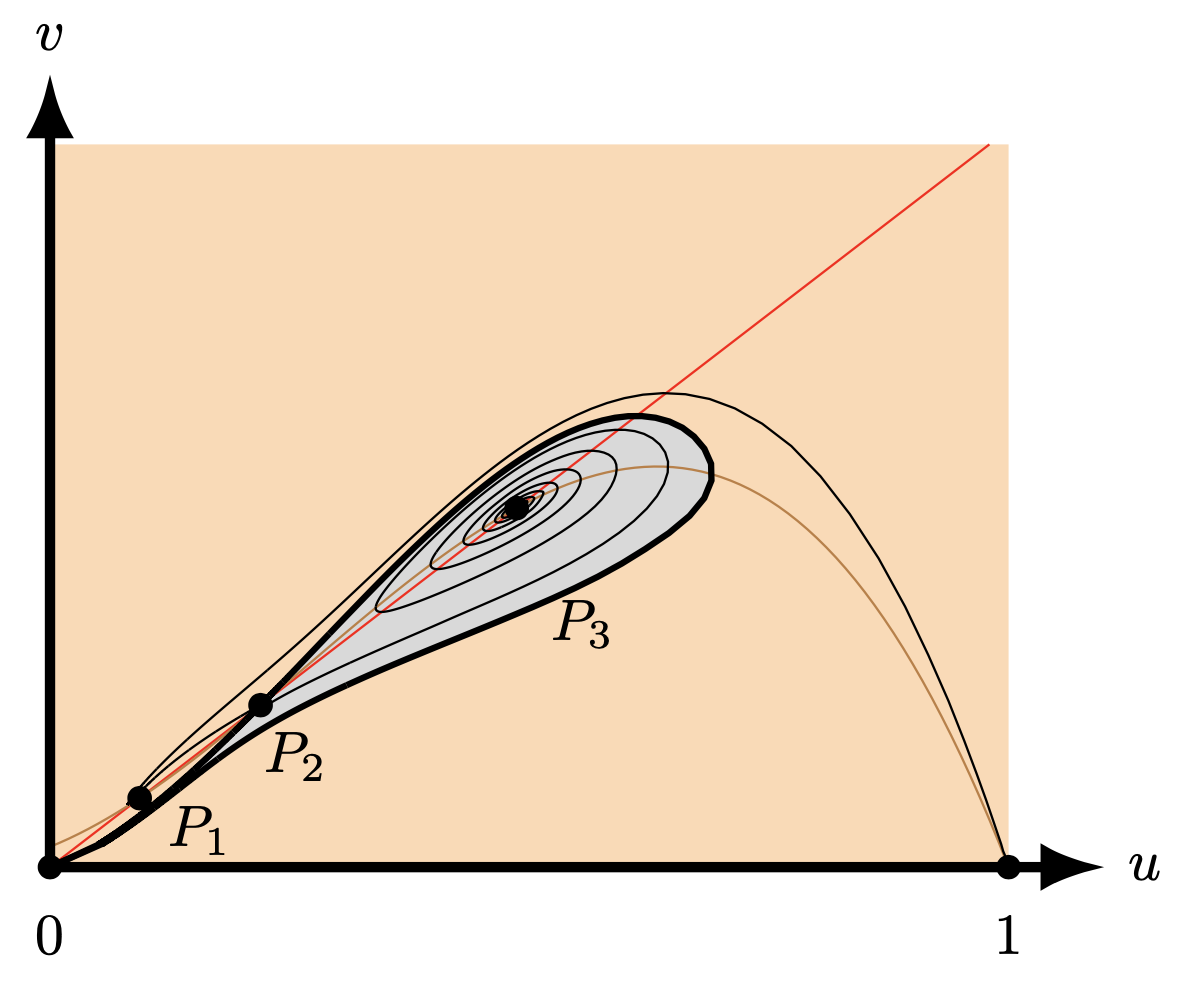}
\caption{For $S=0.235$ both $P_1$ and $P_3$ are attractors. The stable manifold of $P_2$ forms a separatrix between the basin of attraction of $P_1$ (orange region) and the basin of attraction of $P_3$ (gray region). Here, $W^s_\nearrow\left(P_2\right)$ connects to $(0,0)$ and $W^u_\nwarrow\left(1,0\right)$ connects to $P_1$.}\label{F09_c}
\end{subfigure}
\hfill
\begin{subfigure}[b]{0.48\textwidth} \label{F09_b}\centering
\includegraphics[width=7cm]{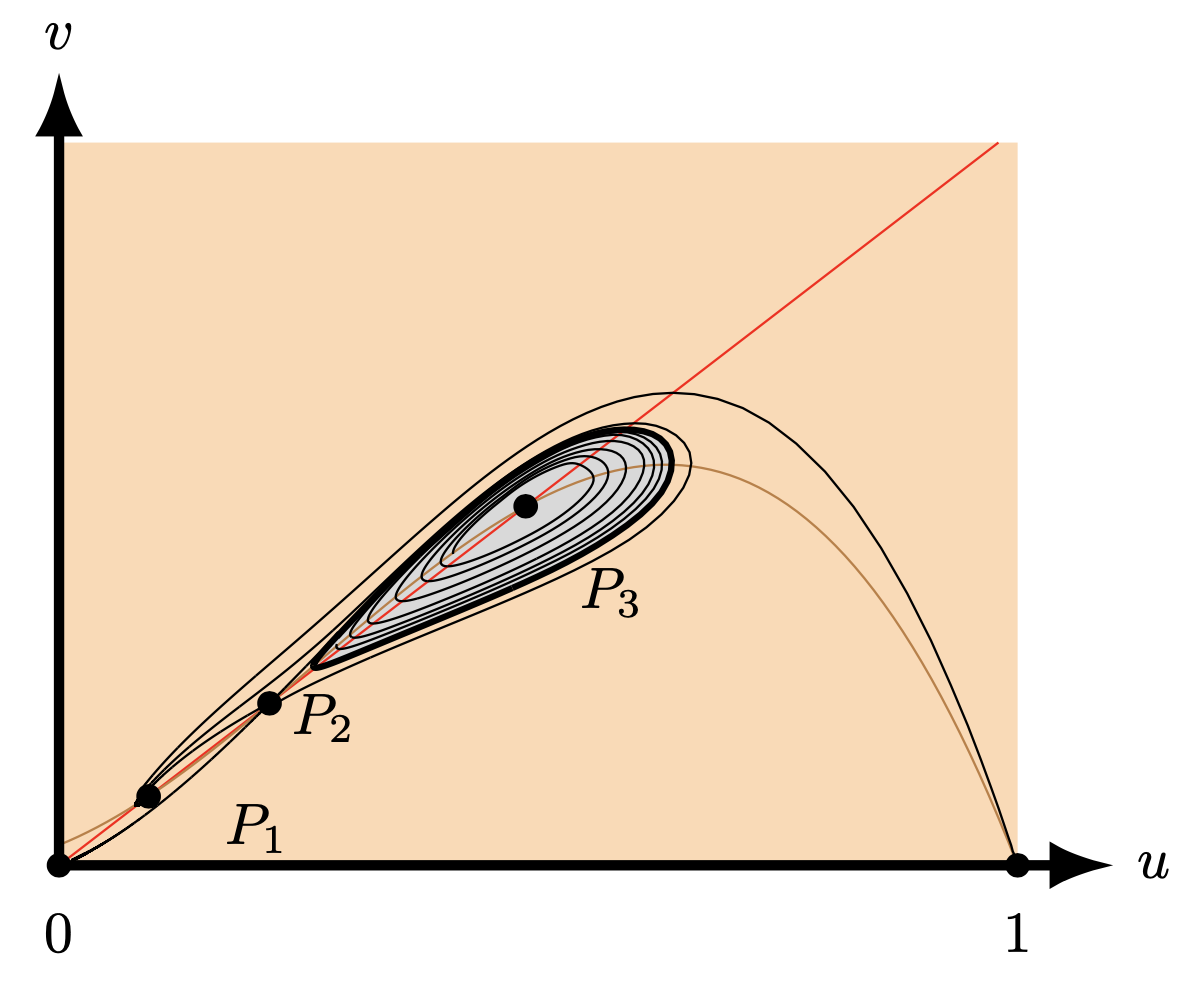}
\caption{For $S=0.225$ both $P_1$ and $P_3$ are still attractors and $P_3$ is surrounded by an unstable limit cycle. This limit cycle forms a separatrix between the two basins of attraction. Here, $W^u_\nwarrow\left(1,0\right)$ connects to $P_1$}\label{F09_d}
\end{subfigure}
\hfill
\begin{subfigure}[b]{0.48\textwidth}\centering
\includegraphics[width=7cm]{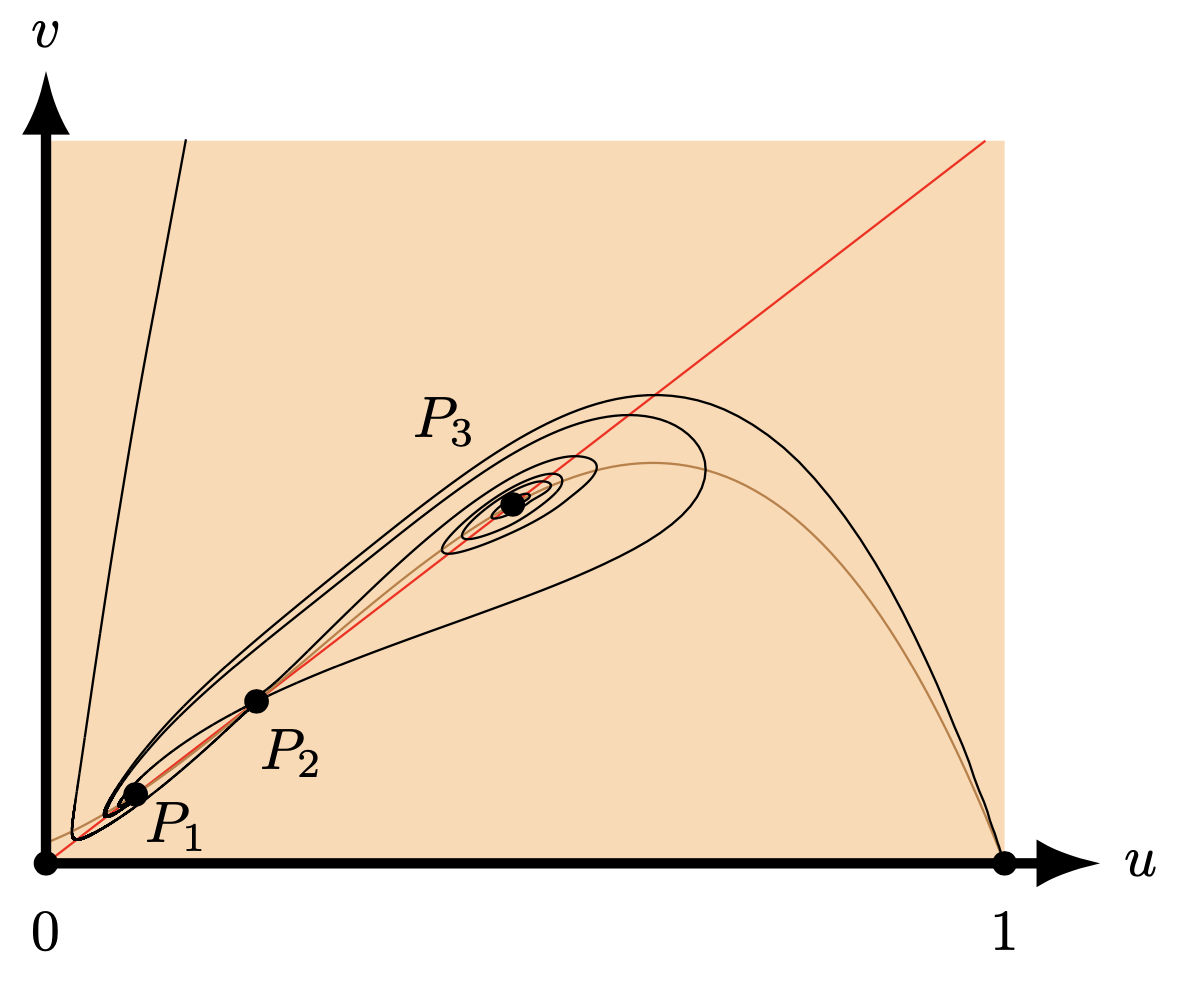}
\caption{For $S=0.18$ the equilibria $P_3$ is a repeller and $P_1$ is a global attractor. Hence, also $W^u_\nwarrow\left(1,0\right)$ now connects to $P_1$.\\~}\label{F09_e}
\end{subfigure}
\hfill
\begin{subfigure}[b]{0.48\textwidth}\centering
\includegraphics[width=7cm]{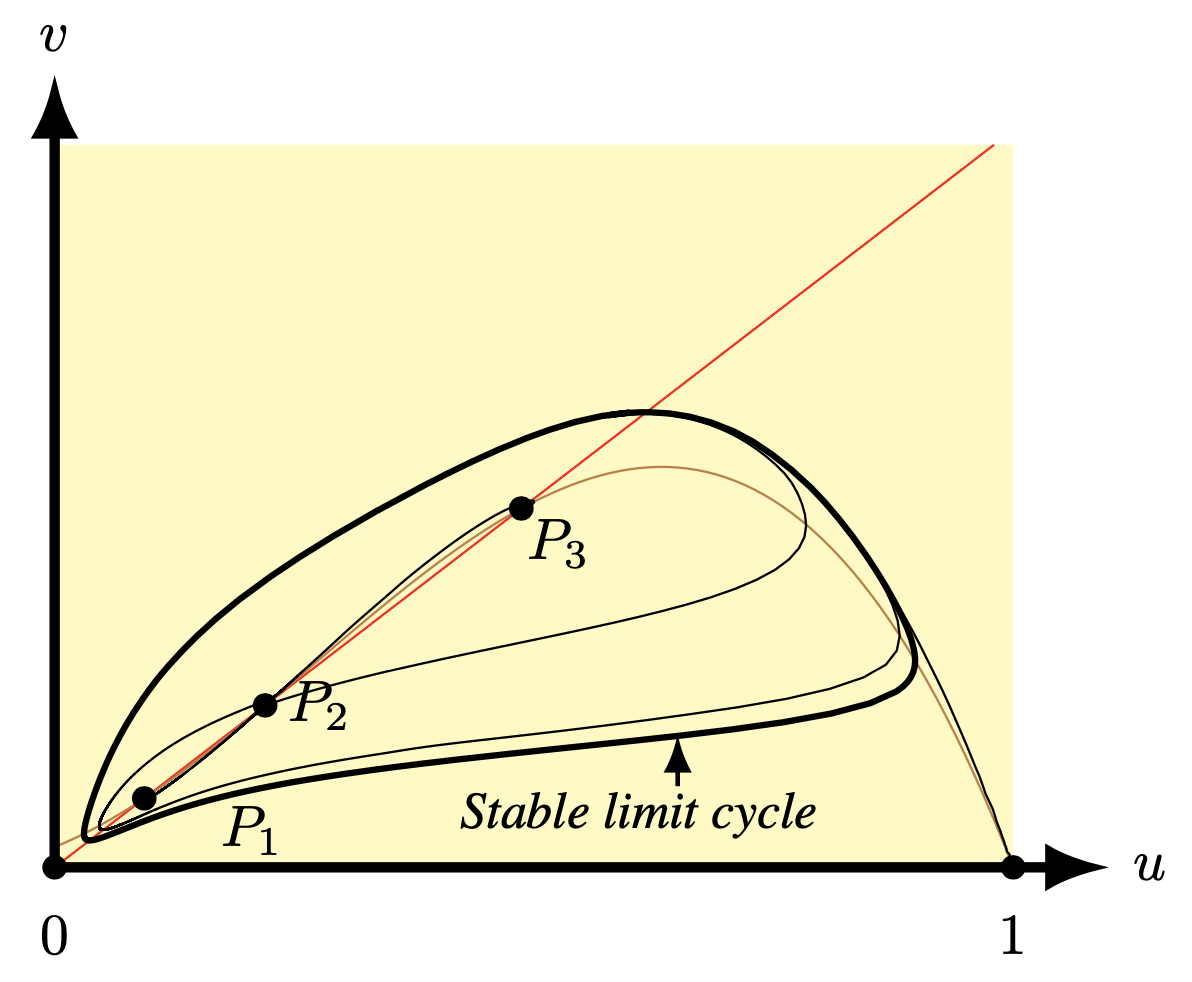}
\caption{For $S=0.13$ both $P_1$ and $P_3$ are repellers and $W^u_\nwarrow\left(1,0\right)$ connects to a stable limit cycle which surrounds the three positive equilibria.}\label{F09_f}
\end{subfigure}
\hfill
\caption{Phase plane of system~\eqref{eq05} for $A=0.1$, $M=-0.1$ and $Q=0.363$ for varying $S$. The equilibria $(0,0)$, $(1,0)$ and $P_2$ are always saddles. The brown (red) curve represents the predator (prey) nullcline.}
\label{F09}
\end{figure}
Since equation~\eqref{eq06} does not depend on the parameter $S$ it does not affect the number of positive equilibria. In contrast, modifying $Q$ impacts $\Delta$ \eqref{delta} and hence the number of positive equilibria. If we assume that the system parameters are such that we have three distinct positive equilibria $P_1=(u^*,u^*), P_2=(u^*_-,u^*_-)$ and $P_3=(u^*_+,u^*_+)$ with $u^*<u^*_-<u^*_+$, then the equilibrium $P_2$ is a saddle point, while the other equilibria $P_1$ and $P_3$ are repellers or attractors, see Corollary~\ref{C02}. Let $W^s_\swarrow (P_2)$ denote the stable manifold of $P_2$ that approaches $P_2$ from a northeast direction and  $W^s_\nearrow (P_2)$ the stable manifold of $P_2$ that approaches $P_2$ from a southwest direction, see panel~(a) of Figure~\ref{F09}. By continuity of the vector field in $S$ and the parameter $Q$ fixed (see Figure~\ref{F09}), the following behaviour is observed for the equilibria $P_1$ and $P_3$:
\begin{enumerate}[label=(\roman*)]
\item For large $S$, Lemma~\ref{L01new} yields that both $P_1$ and $P_3$ are repellers (i.e. for $S> \max_{\tilde{u} \in (0,1)} f(\tilde{u})$, where $f$ is defined in Lemma~\ref{L01new}) and consequently we observe that $W^s_\swarrow(P_2)$ connects to the boundary of region $\Phi$~\eqref{phi}. In particular, $W^s(P_2)=W^s_\nearrow (P_2)\bigcup W^s_\swarrow (P_2)$ creates a separatrix curve between the basin of attraction of $P_1$ and $P_3$. Hence, any solutions having initial conditions above the separatrix lie in the basin of attraction of $P_1$, see orange region of Figure~\ref{F09}. Whereas any solutions with initial conditions under of the separatrix lie in the basin of attraction of $P_3$, see gray region of Figure~\ref{F09}. The $\alpha$-limit of $W^s_\swarrow (P_2)$ is outside of $\Phi$, hence the curve $W^s_\swarrow (P_2)$ lies above $W^u_\nwarrow\left(1,0\right)$, the unstable manifold of $(1,0)$ that leaves $(1,0)$ in a northwest direction and necessarily remains in $\Phi$. Hence $W^u_\nwarrow\left(1,0\right)$ connects to $P_3$, see panel~(a) of Figure~\ref{F09}\footnote{{\it A priori}, the role of $P_1$ and $P_3$ could be interchanged}. 
\item Then, by further reducing $S$, there exists conditions in the $\left(Q,S\right)$-parameters space for which the two manifolds $W^u_\nwarrow\left(1,0\right)$ and $W^s_\swarrow (P_2)$ coincide, forming the heteroclinic curve~\cite{chicone}, see panel~(b) of Figure~\ref{F09}. 
\item Upon further decreasing $S$, $W^s(P_2)$ connects with $(0,0)$, see panel~(c) of Figure~\ref{F09}. 
Furthermore, there exists an $S$-value for which $W^s_{\swarrow}(P_2)$ connects with $W^u_{\nearrow}(P_2)$ (i.e. $W^s_{\swarrow}(P_2) = W^u_{\nearrow}(P_2)$) generating an homoclinic curve. Note that this case is not shown in Figure~\ref{F09} but it lies between $S=0.235$ of panel (c) and $S = 0.225$ of panel (d). \item When the homoclinic breaks an unstable limit cycle is created around $P_2$, see panel~(d) of Figure~\ref{F09}. 
\item Upon further decrasing $S$, $P_3$ becomes unstable and $P_1$ turns into an attractor and $W^u_\nwarrow\left(1,0\right)$ connects to $P_1$, see panel~(e) of Figure~\ref{F09}. 
\item Finally, if the system parameters are such that $f(u^*)$ and $f(u^*_+)$ are positive, where $f$ is defined in Lemma~\ref{L01new}, then, $P_1$ and $P_3$ are repellers for $S< \min\{f(u^*),f(u^*_+)\}$. Hence, by Corollary~\ref{C02} the system possesses a limit cycle, and $W^u_\nwarrow\left(1,0\right)$ connects to this limit cycle, while $W^s_\swarrow (P_2)$ connects with $W^u_\swarrow (P_3)$ and $W^s_\nearrow (P_2)$ connects with $W^u_\nearrow (P_1)$, see panel~(f) in Figure~\ref{F09}. 
\end{enumerate}
Note that the dynamics described above is also observed in studies related to heteroclinic cycles. These dynamics refer to heteroclinic trajectories which are saddle-type of invariant sets. For instance, in~\cite{labouriau} the authors observed that the system could experience by continuity a saddle-node bifurcation, a Hopf bifurcation and a homoclinic bifurcation. 

\subsection{Bifurcation Analysis}\label{BA}
In this section we discuss some of the potential bifurcation scenarios. For brevity, we focus only on the case where $P_2$ collapses with $P_3$. To note, similar results, but under different parameter conditions, hold for the other case (i.e. $P_1$ collides with $P_2$)  and the proofs, which will be omitted, go in a similar fashion.

\begin{theo}\label{SN}
Let the system parameters of~\eqref{eq05} be such that the conditions of case~\ref{NEP2}\ref{NEP2b} of Lemma~\ref{NEP} are met and assume that $P_2$ and $P_3$ coincide, i.e. $\Delta=0$~\eqref{delta}. 
Then, system~\eqref{eq05} experiences a saddle-node bifurcation at the equilibrium $P_2=P_3=\left(u^*_-,u^*_-\right)$.
\end{theo}
\begin{proof}
For $\Delta=0$ the equilibria $P_2$ and $P_3$ collapse and reduce to $\left(u^*_-,u^*_-\right)=\left(T\left(A,M\right)-u^*\right)/2$ \eqref{eq08}. The other positive equilibrium is $P_1 = (u^*,u^*)$ with $u^*< u^*_-$ by assumption.
So, the Jacobian matrix of the system~\eqref{eq05} evaluated at the equilibrium $\left(u^*_-,u^*_-\right)$ is 
	\[\begin{aligned}
	J\left(u^*_-,u^*_-\right) & =\begin{pmatrix}
	\dfrac{Q\left(u^*-T\left(A,M\right)\right)^2}{4} & -\dfrac{Q\left(u^*-T\left(A,M\right)\right)^2}{4} \\ 
	& \\
	\dfrac{S\left(1+A+M-u^*\right)\left(u^*-T\left(A,M\right)\right)}{4} & -\dfrac{S\left(1+A+M-u^*\right)\left(u^*-T\left(A,M\right)\right)}{4}
	\end{pmatrix}\\
	\end{aligned}\,,\]
	see also \eqref{eq15}, and $\det\left(J\left(u^*_-,u^*_-\right)\right)=0$. Let 
	$V=(v_1, v_2)^T = (1, 1)^T$ be 
	the eigenvector corresponding to the eigenvalue $\lambda=0$ of the matrix $J\left(u^*_-,u^*_-\right)$. Additionally, let $U=(u_1 ,u_2 )^T=(S\left(1+A+M-u^*\right)/(Q\left(u^*-T\left(A,M\right)\right)),  1)^T$.

	The dynamical system~\eqref{eq05} in vector form is given by
	\begin{equation} \label{eq16}
	f\left(u,v;Q\right) =\begin{pmatrix}
	\left(u+A\right)\left(1-u\right)\left(u-M\right)-Qv\\ 
	u-v
	\end{pmatrix}\,.
	\end{equation}
	Differentiating $f(u,v,Q)$
	with respect to the bifurcation parameter $Q$ and evaluating at $P_2$ gives
	\[f_Q\left(u^*_-,u^*_-,Q\right)=\begin{pmatrix}
	\dfrac{u^*-T\left(A,M\right)}{2}\\[2mm]
	0
	\end{pmatrix}.\]
	Therefore,
	\[U\cdot f_Q\left(u,v;Q\right)=\dfrac{S\left(1+A+M-u^*\right)}{2Q}\neq0.\]
	Next, we analyse the expression $U\cdot D^2f\left(u,v;Q\right)\left(V,V\right)$. The latter is given by  
	\[\begin{aligned}
	D^2f\left(u,v;Q\right)\left(V,V\right) & =\dfrac{\partial^2f\left(u,v;Q\right)}{\partial u^2}v_1v_1+\dfrac{\partial^2f\left(u,v;Q\right)}{\partial u\partial v}v_1v_2 +\dfrac{\partial^2f\left(u,v;Q\right)}{\partial v\partial u}v_2v_1+\dfrac{\partial^2f\left(u,v;Q\right)}{\partial v^2}v_2v_2\\
	& = \begin{pmatrix}
	-2\left(2+A-M\right)\\ 
	0
	\end{pmatrix}. 
	\end{aligned}\]
	Thus,
	\[\begin{aligned}
	U\cdot D^2f\left(u,v;Q\right)\left(V,V\right)=-\dfrac{2S\left(2+A-M\right)\left(1+A+M-u^*\right)}{Q\left(u^*-T\left(A,M\right)\right)}\neq0
	\end{aligned}.\]
	Therefore, by Sotomayor's Theorem~\cite[Section 4.2, e.g]{perko} system~\eqref{eq05} has a saddle-node bifurcation at $P_2=\left(u^*_-,u^*_-\right)$.
\end{proof}

\begin{theo}\label{BT}
Let the system parameters of~\eqref{eq05} be such that the conditions of case~\ref{NEP2}\ref{NEP2b} of Lemma~\ref{NEP} are met and assume that $P_2$ and $P_3$ coincide, i.e. $\Delta=0$~\eqref{delta}, and let
\begin{equation} \label{SS}S=\dfrac{Q\left(u^*-T\left(A,M\right)\right)}{\left(1+A+M-u^*\right)}.\end{equation}
%Then, system~\eqref{eq05} experiences a Bogdanov--Takens bifurcation at the equilibrium $P_2=P_3=(u^*_-,u^*_-)$.
Then, the equilibrium point $P_2=P_3=(u^*_-,u^*_-)$ is a cusp point.
\end{theo}
See~\cite{andronov}, for a formal definition of a cusp point.
\begin{proof}
	If $S=Q\left(u^*-T\left(A,M\right)\right)/\left(1+A+M-u^*\right)$, then $\det\left(J\left(u^*_-,u^*_-\right)\right)=0$ and $\tr\left(J\left(u^*_-,u^*_-\right)\right)=0$ and the Jacobian matrix of system~\eqref{eq05} evaluated at the equilibrium $\left(u^*_-,u^*_-\right)$ simplifies to
	\[J\left(u^*_-,u^*_-\right) =-\dfrac{1}{4}S\left(1+A+M-u^*\right)\left(u^*-T\left(A,M\right)\right)\begin{pmatrix}
	1 & -1 \\ 
	1 & -1 
	\end{pmatrix}.\]
	Now, we find the Jordan normal form for $J\left(u^*_-,u^*_-\right)$. It has repeated eigenvalues and a unique eigenvector
	$(1,1)^T$. This vector will be the first column of the matrix of transformations $\Upsilon_4$. To obtain the second column, we choose a vector that makes the matrix $\Upsilon_4$ non-singular. In this case, we take $(-1,0)^T$. 
Thus,
	\[\begin{aligned}
	\Upsilon_4 &= \begin{pmatrix}
	1 & -1 \\ 
	1 & 0 
	\end{pmatrix}, 
	\end{aligned}\]
	and
	\[\begin{aligned}
	\Upsilon_4^{-1}\left(J\left(u^*_-,u^*_-\right)\right)\Upsilon_4 &=\begin{pmatrix}
	0 & -\dfrac{1}{4}S\left(1+A+M-u^*\right)\left(u^*-T\left(A,M\right)\right) \\ 
	0 & 0 
	\end{pmatrix} . 
	\end{aligned}\]
	Hence, we have that the equilibrium $\left(u^*_-,u^*_-\right)$ is a codimension 2 cusp point~\cite{xiao2} for $(Q,S)$ such that $\Delta=0$. 
\end{proof}
A Bogdanov–Takens bifurcation can be obtained by following~\cite{xiao2,huang,zhu} where the authors showed that this type of bifurcation can be obtained by unfolding the system around the cusp of codimension two. In other words, if we consider a small neighbourhood of the equilibrium point $P_2=P_3=(u^*_-,u^*_-)$ and $Q$ and $S$ vary near $S_0\left(Q_0\right):=Q_0\left(u^*-T\left(A,M\right)\right)/\left(1+A+M-u^*\right)$, then system~\eqref{eq05} undergoes a Bogdanov--Takens bifurcation.
\begin{figure}
\begin{center}
\includegraphics[width=10.5cm]{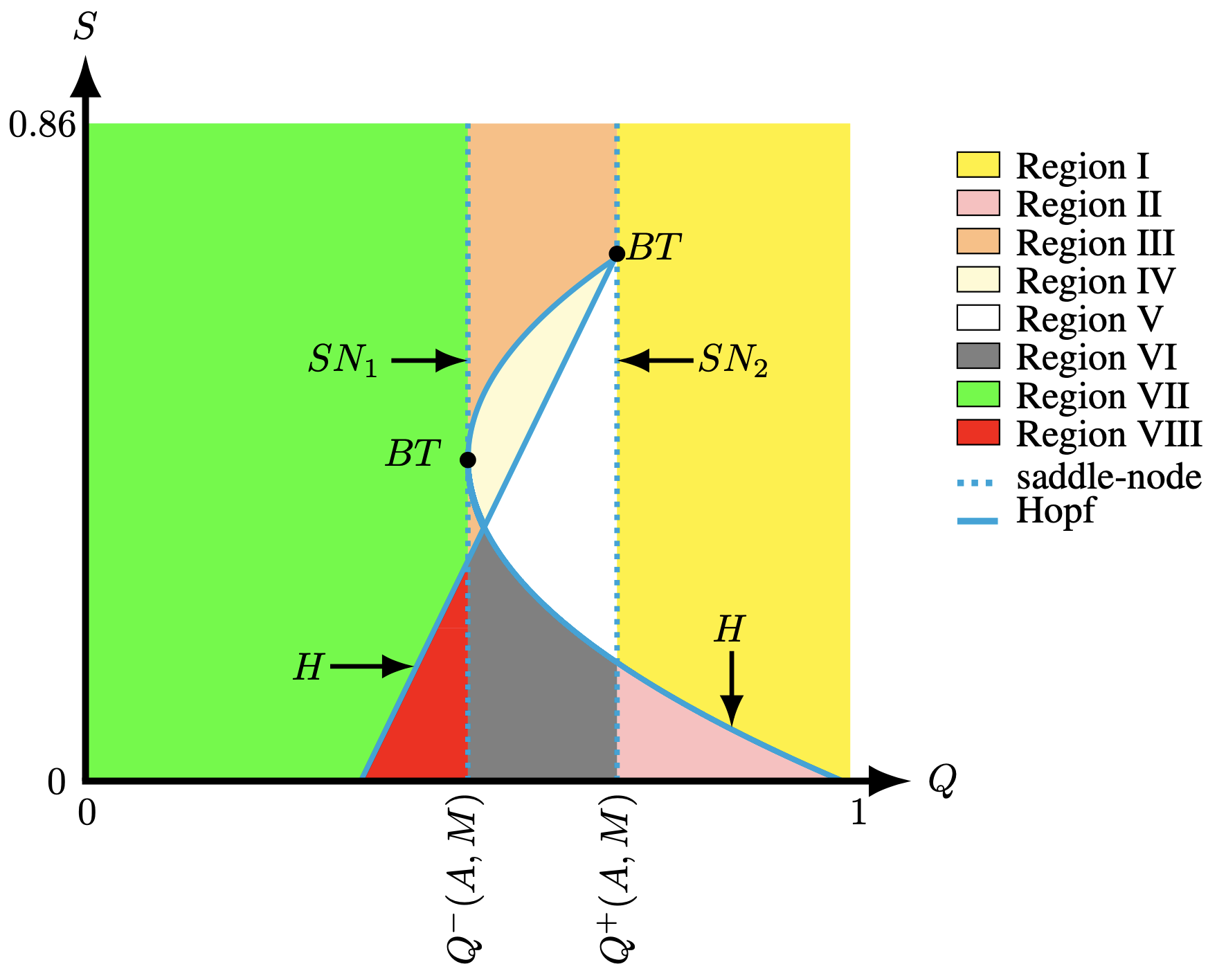}
\caption{The bifurcation diagram of system~\eqref{eq05} for $\left(A,M\right)=\left(0.1,-0.1\right)$ fixed created with the numerical bifurcation package MATCONT~\cite{matcont}. The curve $H$ represents the Hopf curve, $SN_{1,2}$ represents the saddle-node curve, and $BT$ represents the Bogdanov--Takens bifurcation.}\label{F08}
\end{center}
\end{figure}
Luckily, nowadays, there are several computational methods to find Bogdanov--Takens points and these methods are implemented in software packages such as MATCONT~\cite{matcont}. Figure~\ref{F08} illustrates the two Bogdanov--Takens bifurcations which were detected with MATCONT in the $\left(Q,S\right)$-plane for $\left(A,M\right)$ fixed. In particular, the bifurcation curves obtained from Theorem~\ref{SN} and~\ref{BT} divide the $\left(Q,S\right)$-parameter-space into nine regions, see Figure \ref{F08}. From our results we observe that for $A$ and $M$ fixed modifying the parameter $Q$ impacts the number of positive equilibria of system \eqref{eq05}. In contrast, the modification of the parameter $S$ only changes the stability of the positive equilibria $P_1$ and $P_3$ of system \eqref{eq05}, while the other equilibria $\left(0,0\right)$, $\left(1,0\right)$ and $P_2$ are always saddle points. When the parameters lie in the curve $Q=Q^-(A,M)$ the equilibria $P_1$ and $P_2$ collapse and we have $P_1=P_2=(u^*,u^*)$ and $P_3=(u^*_+,u^*_+)$ with $u^*<u^*_+$. In addition, when parameters lie in the curve $Q=Q^{+}(A,M)$ the equilibria $P_2$ and $P_3$ collapse and  we have $P_1 = (u^*,u^*)$ and $P_2=P_3= (\left(T\left(A,M\right)-u^*\right)/2,\left(T\left(A,M\right)-u^*\right)/2)$. Along these lines we observe a saddle-node bifurcation, see Theorem~\ref{SN}. The system experiences a Bogdanov--Takens bifurcation if, in addition, $S=Q\left(u^*-T\left(A,M\right)\right)/\left(1+A+M-u^*\right)$, see Theorem~\ref{BT}. When the parameters are located in Region I and VII, system~\eqref{eq05} has one positive equilibrium which is an attractor, while when the parameters are located in Region II and VIII system~\eqref{eq05} also has one positive equilibrium but now it is a repeller surrounded by a stable limit cycle, see Figure~\ref{F04}. 
When the parameters moved to Regions III--VI system~\eqref{eq05} has three equilibria. In these regions, $P_2$ is always a saddle point. 
In region III $P_1$ and $P_3$ are both attractors, see panels~\ref{F09_a}--\ref{F09_c} in Figure~\ref{F09}. 
Furthermore, when the parameters lie in Regions IV $P_1$ is an attractor and $P_3$ is an attractor surrounded by an unstable limit cycle, see panel~\ref{F09_d} in Figure~\ref{F09}.  When the parameters lie in Regions V $P_1$ is an attractor and $P_3$ is a repeller, see~\ref{F09_e} in Figure~\ref{F09}. Finally, when the parameters are located in Region VI, $P_1$ and $P_3$ are both repellers and the equilibria $P_{1,2,3}$ are thus surrounded by a stable limit cycle, see~\ref{F09_f} in Figure~\ref{F09}. Note that Figure~\ref{F08} only shows a partial bifurcation diagram, since there could exist a homoclinic curve which is located between the Hopf and the saddle-node curve. The homoclinic bifurcation and other types of bifurcations are not include in Figures~\ref{F09} and~\ref{F08}, however it could be observed in the transition from panel~\ref{F09_c} to panel~\ref{F09_d} for $S\in(0.225,0.235)$. In addition, the bifurcation curves observed in Figure~\ref{F08} are qualitatively similar to the bifurcation studied in~\cite{labouriau}. In particular, the authors observed that the variation of two parameters impacts the number of equilibria of the system and the stability of these equilibria. It was also observed that, by continuity, the stability of an equilibrium point changes from stable to unstable point surrounded by a stable limit cycle. Then, the limit cycle increases until it joins a homoclinic curve which is the connection between the stable and unstable manifold of a saddle point.

\section{Conclusions}\label{con}
In this manuscript, we studied a Leslie--Gower predator-prey model with weak Allee effect and functional response Holling type II, i.e. system~\eqref{eq03} with $m<0$. We simplified the analysis by studying a topologically equivalent system~\eqref{eq05}. The topologically equivalent system~\eqref{eq05} has two equilibria on the axis which are always (non-hyperbolic) saddle points, see Lemma~\ref{TEST}. 
In addition, system~\eqref{eq05} has at most three positive equilibria in the first quadrant, see Lemma~\ref{NEP} and Figure~\ref{F02}.  
As the function $\varphi$ is a diffeomorphism preserving the orientation of time, the dynamics of system \eqref{eq03} is topologically equivalent to system \eqref{eq05} \cite[Theorem 1]{arancibia3}. Therefore, we can, for instance, conclude from the results of Section~\ref{S02} that there are conditions on the system parameter for which the predator and prey can coexist without oscillations or with oscillations, see Figure~\ref{F09} and this behaviour depends intrinsically and nonlinearly on the system parameters including the predation rate ($q$) and the intrinsic growth rate of the predator ($s$). 

In more detail, when the system parameters are such that system~\eqref{eq05} has only one positive equilibrium, i.e. $Q < Q^-(A,M)$ or $Q> Q^+(A,M)$, see Regions I, II, VII and VIII in Figure~\ref{F08}, then it is a repeller surrounded by a stable limit cycle or an attractor, but not a saddle point, see Corollary~\ref{C01} and Figure~\ref{F04}. 
Note that when the equilibrium is an attractor it is not necessary a global attractor since there is a small region in parameter space near the Hopf bifurcation where the equilibrium is surrounded by two limit cycles, see panel (a) of Figure~\ref{F10}. 
The observed behaviour for system~\eqref{eq05}, and hence system~\eqref{eq03} with $m<0$, in this case of one positive equilibrium -- global attractor, attractor with two limit cycles, or repeller with one limit cycle -- is similar to the behaviour of the original Leslie--Gower predator-prey \eqref{eq02} with logistic growth for the prey \cite{saez}. In other words, 
for $q$ smaller than some $q^-$ or larger than some $q^+$ both systems~\eqref{eq02} and~\eqref{eq03} with $m<0$ present qualitatively similar behaviours, see panel~\ref{F04_b} in Figure~\ref{F04}.
\begin{figure} 
\centering
\begin{subfigure}[b]{0.48\textwidth}\centering
\includegraphics[width=8cm]{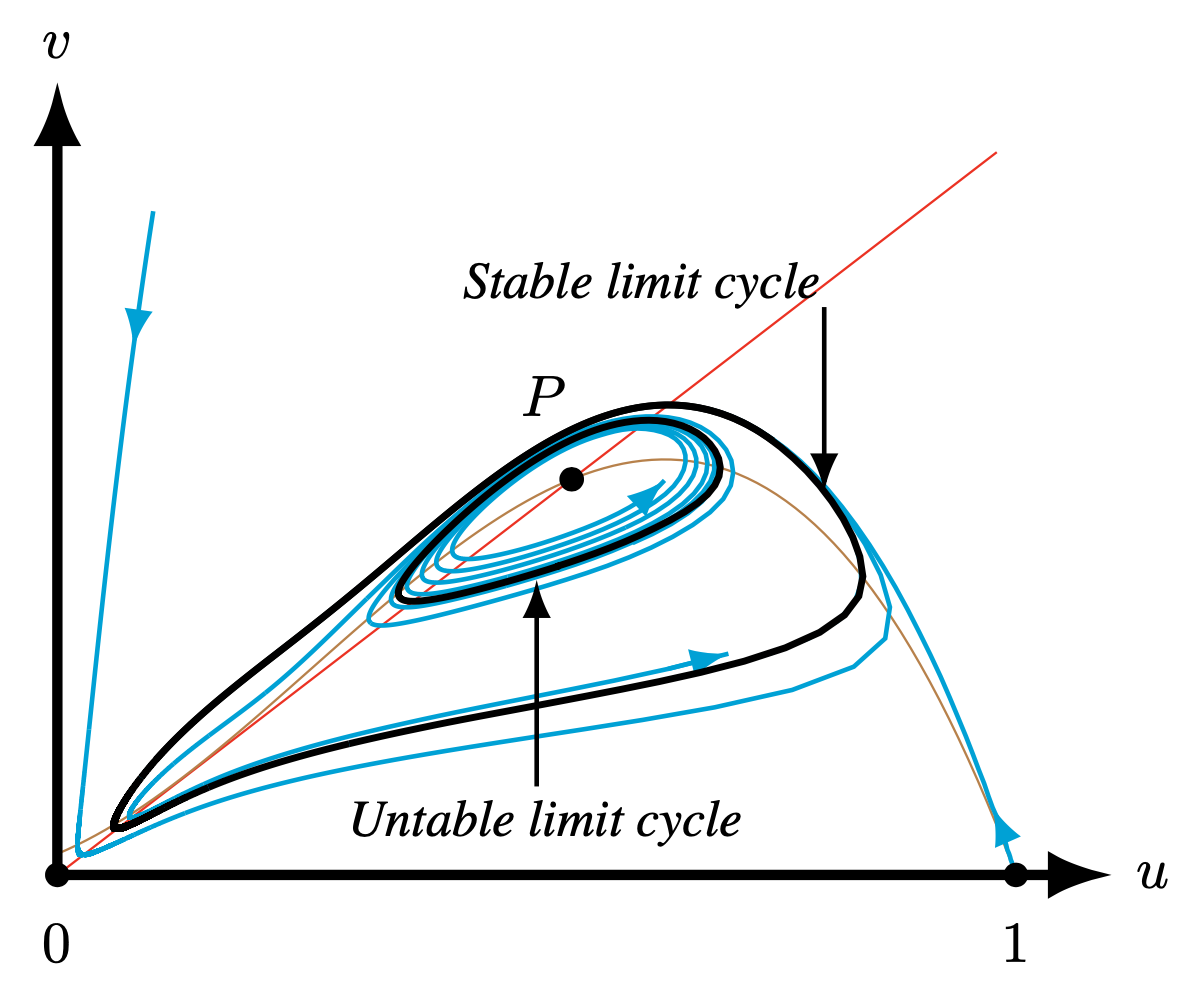}
\caption{For $Q=0.345$ and $S=0.134332$ $P_1$ is an attractor surrounded by two limit cycles, the inner one is unstable, while the outer limit cycle is stable.\\~}\label{F10_a}
\end{subfigure}	
\hfill
\begin{subfigure}[b]{0.48\textwidth}\centering
\includegraphics[width=8cm]{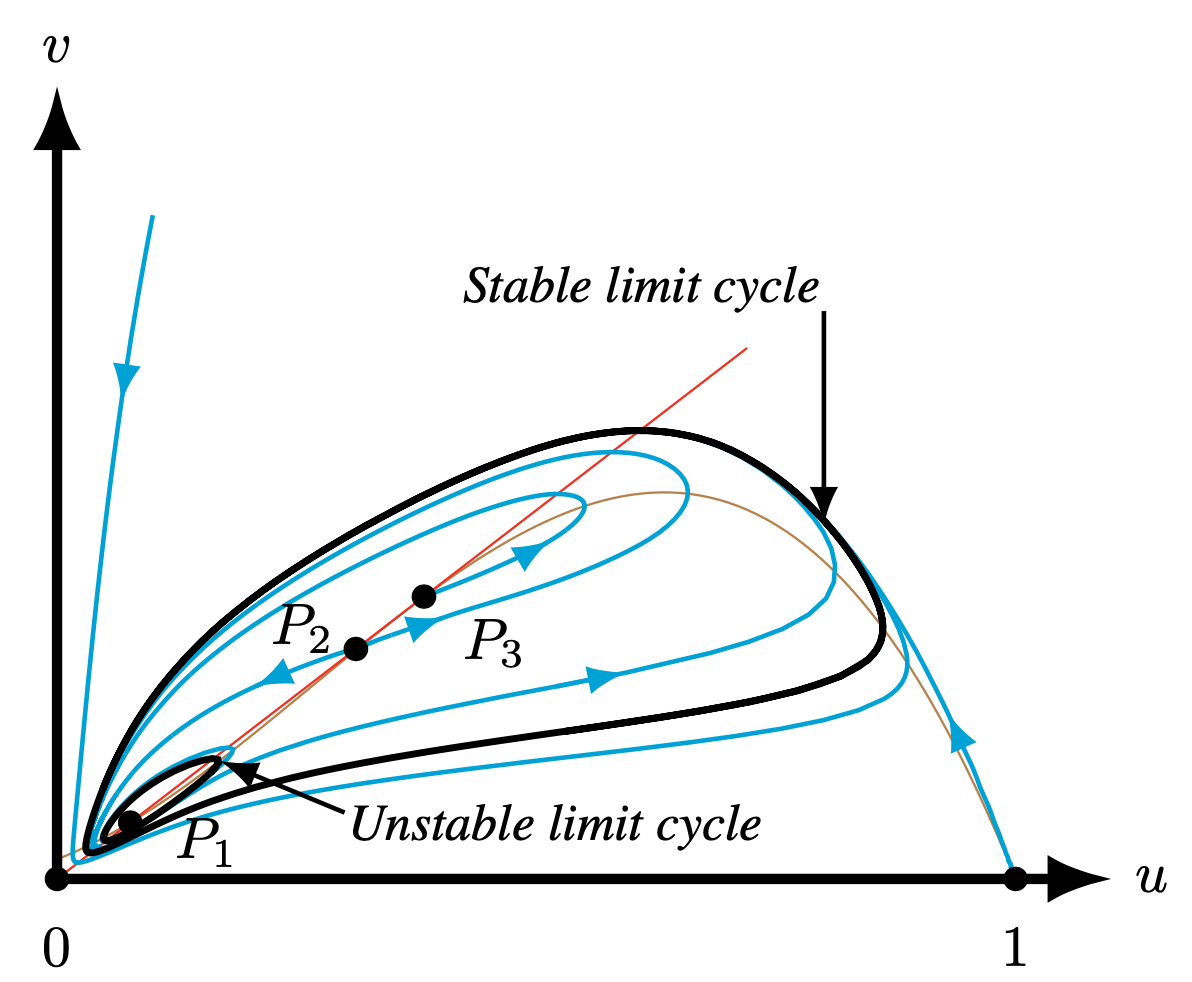}	
\caption{For for $Q=0.363$ and $S=0.1298$ system~\eqref{eq05} has three equilibria. Here, $P_2$ is a saddle, $P_1$ is an attractor surrounded by an unstable limit cycles and $P_3$ is a repeller. The three positive equilibria $P_{1,2,3}$ are surrounded by a stable limit cycle.}\label{F10_b}
\end{subfigure}
\caption{Phase plane of system~\eqref{eq05} for $A=0.1$ and $M=-0.1$ for varying $Q$ and $S$. The equilibria $(0,0)$ and $(1,0)$ are always saddles. The brown (red) curve represents the predator (prey) nullcline.}
\label{F10}
\end{figure}

In contrast, for $Q^-(A,M)< Q < Q^+(A,M)$, system~\eqref{eq05} has three equilibria in the first quadrant, which is not possible in the original Leslie--Gower predator-prey \eqref{eq02} nor in the model with strong Allee effect, i.e. system~\eqref{eq03} with $m>0$.
In this case, the middle one is always a saddle point, while the outer two are repellers or attracters, see Corollary~\ref{C02} and Figures~\ref{F05} and ~\ref{F09}. If the two outer equilibria are attractors, then either the stable manifold of the saddle point determines a separatrix curve which divides the basins of attraction of the two attracters, or there exists an unstable limit cycle dividing the basins of attraction, see panels (a) -- (d) of Figure~\ref{F09}. When both outer equilibria are repellers the system necessarily possesses a stable limit cycle, see panel (f) of Figure~\ref{F09}. 
When the outer equilibria are an attractor and a repeller, the attractor can be a global attractor or there are two limit cycles surrounding the equilibria, see panel (e) of Figure~\ref{F09} and panel (b) of Figure~\ref{F10}. The latter one again happens near the Hopf bifurcation.
In other words, there are regions in parameter space for which system~\eqref{eq05} has two attractors or an attractor and a stable limit cycle. As such, a modification of one, or both, of the species could have the result that you end up in a different basin of attraction and you will thus have significantly different dynamics as you will approach the other attractor, see panels~(a)--(d) on Figure~\ref{F09} and panel (b) of Figure~\ref{F10}.
We also performed an initial (numerical) bifurcation analysis which revealed the existence of saddle-node and Bogdanov--Takens bifurcations, see Theorems~\ref{SN} and \ref{BT} and Figure~\ref{F08}.

In summary, we showed that the weak Allee effect in the Leslie--Gower model~\eqref{eq03} presents -- due to the presence of a region in parameter space where we have three positive equilibria -- richer dynamics than the original Leslie--Gower predator-prey model~\eqref{eq03} studied, for example, by Saez and Gonzalez-Olivares \cite{saez}. The original Leslie--Gower predator-prey model~\eqref{eq03} does not possess the bifurcations studied in Section~\ref{BA}. That is, a saddle-node bifurcation and a Bogdanov–Takens bifurcation. Moreover, the original model only has one positive equilibrium point, which can be stable surrounded by two limit cycles. Therefore, the populations could switch between a stable state or an oscillatory behaviour. However, system~\eqref{eq03} with a weak Allee effect can have two stable equilibria and thus depending on the initial conditionthe population could end up in two stable states. From an ecological point of view, we can also conclude that the species in system~\eqref{eq03} could coexist or oscillate but never go extinct, see Figure~\ref{F09}. This behaviour was also observed by Ostfeld and Canhan~\cite{ostfeld}. The authors found that the stabilisation of vole (\textit{Microtus agrestis}) populations in Southeastern New York depend on the variation in reproductive rate and thus this impact the reproductive activity of the species. This is again similar to the original Leslie--Gower predator-prey model~\eqref{eq03}, but different from the Leslie--Gower model~\eqref{eq03} with strong Allee effect, i.e. $m>0$. In the latter case, the model supports coexistence and extinction of the species~\cite{arancibia3}. 

%%%%%%__________________References__________________%%%%%%
\bibliographystyle{unsrt}

\begin{thebibliography}{10}

\bibitem{arancibia7}
C.~Arancibia-Ibarra.
\newblock The basins of attraction in a {Modified May-Holling-Tanner}
  predator-prey model with {Allee} effect.
\newblock {\em Nonlinear Analysis: Theory, Methods \& Applications},
  185:15--28, 2019.

\bibitem{kundu}
S.~Kundu and S.~Maitra.
\newblock Asymptotic behaviors of a two prey one predator model with
  cooperation among the prey species in a stochastic environment.
\newblock {\em Journal of Applied Mathematics and Computing}, pages 1--27,
  2019.

\bibitem{martinez2}
N.~Mart{\'\i}nez-Jeraldo and P.~Aguirre.
\newblock Allee effect acting on the prey species in a {Leslie--Gower}
  predation model.
\newblock {\em Nonlinear Analysis: Real World Applications}, 45:895--917, 2019.

\bibitem{mateo}
R.~Mateo, A.~Gast{\'o}n, M.~Aroca-Fern{\'a}ndez, O.~Broennimann, A.~Guisan,
  S.~Saura, and J.~Garc{\'\i}a-Vi{\~n}as.
\newblock Hierarchical species distribution models in support of vegetation
  conservation at the landscape scale.
\newblock {\em Journal of Vegetation Science}, 30:386--396, 2019.

\bibitem{mondal}
A.~Mondal, A.~Pal, and GP. Samanta.
\newblock On the dynamics of evolutionary {Leslie-Gower} predator-prey
  eco-epidemiological model with disease in predator.
\newblock {\em Ecological Genetics and Genomics}, 10:1--12, 2019.

\bibitem{santos}
X.~Santos and M.~Cheylan.
\newblock Taxonomic and functional response of a {Mediterranean} reptile
  assemblage to a repeated fire regime.
\newblock {\em Biological Conservation}, 168:90--98, 2013.

\bibitem{turchin}
P.~Turchin.
\newblock {\em Complex population dynamics: a theoretical/empirical synthesis},
  volume~35 of {\em Monographs in population biology}.
\newblock {Princeton} {University} {Press}, Princeton, N.J., 2003.

\bibitem{hooper}
D.~Hooper, F.~Chapin, J.~Ewel, A.~Hector, P.~Inchausti, S.~Lavorel, J.~Lawton,
  D.~Lodge, M.~Loreau, and S.~Naeem.
\newblock Effects of biodiversity on ecosystem functioning: a consensus of
  current knowledge.
\newblock {\em Ecological monographs}, 75:3--35, 2005.

\bibitem{may}
R.~May.
\newblock {\em Stability and complexity in model ecosystems}, volume~6 of {\em
  Monographs in population biology}.
\newblock {Princeton} {University} {Press}, Princeton, N.J., 1974.

\bibitem{moller}
A.P. Moller, B.G. Stokke, and D.S.M. Samia.
\newblock Hawk models, hawk mimics, and antipredator behavior of prey.
\newblock {\em Behavioral Ecology}, 26:1039--1044, 2015.

\bibitem{monclus}
R.~Monclus, D.~von Holst, D.~Blumstein, and H.~R{\"o}del.
\newblock Long-term effects of litter sex ratio on female reproduction in two
  iteroparous mammals.
\newblock {\em Functional ecology}, 28:954--962, 2014.

\bibitem{hanski}
I.~Hanski, H.~Henttonen, E.~Korpim{\"a}ki, L.~Oksanen, and P.~Turchin.
\newblock Small--rodent dynamics and predation.
\newblock {\em Ecology}, 82:1505--1520, 2001.

\bibitem{hanski2}
I.~Hanski, L.~Hansson, and H.~Henttonen.
\newblock Specialist predators, generalist predators, and the microtine rodent
  cycle.
\newblock {\em The Journal of Animal Ecology}, pages 353--367, 1991.

\bibitem{roux}
P.~Roux, J.~Shaw, and S.~Chown.
\newblock Ontogenetic shifts in plant interactions vary with environmental
  severity and affect population structure.
\newblock {\em New Phytologist}, 200:241--250, 2013.

\bibitem{bimler}
M.~Bimler, D.~Stouffer, H.~Lai, and M.~Mayfield.
\newblock Accurate predictions of coexistence in natural systems require the
  inclusion of facilitative interactions and environmental dependency.
\newblock {\em Journal of Ecology}, 106:1839--1852, 2018.

\bibitem{wood}
S.~Wood and M.~Thomas.
\newblock Super--sensitivity to structure in biological models.
\newblock {\em Proceedings of the {Royal Society} of {London. Series B:
  Biological Sciences}}, 266:565--570, 1999.

\bibitem{graham}
I.~Graham M and X.~Lambin.
\newblock The impact of weasel predation on cyclic field-vole survival: the
  specialist predator hypothesis contradicted.
\newblock {\em Journal of Animal Ecology}, 71:946--956, 2002.

\bibitem{leslie}
P.~Leslie.
\newblock Some further notes on the use of matrices in population mathematics.
\newblock {\em Biometrika}, 35:213--245, 1948.

\bibitem{courchamp}
F.~Courchamp, T.~Clutton-Brock, and B.~Grenfell.
\newblock Inverse density dependence and the {Allee} effect.
\newblock {\em Trends in Ecology \& Evolution}, 14:405--410, 1999.

\bibitem{berec}
L.~Berec, E.~Angulo, and F.~Courchamp.
\newblock Multiple {Allee} effects and population management.
\newblock {\em Trends in Ecology \& Evolution}, 22:185--191, 2007.

\bibitem{stephens}
P.~Stephens and W.~Sutherland.
\newblock Consequences of the {Allee} effect for behaviour, ecology and
  conservation.
\newblock {\em Trends in Ecology \& Evolution}, 14:401--405, 1999.

\bibitem{courchamp2}
F.~Courchamp, L.~Berec, and J.~Gascoigne.
\newblock {\em Allee effects in ecology and conservation}.
\newblock Oxford University Press, 2008.

\bibitem{allee}
W.~Allee, O.~Park, A.~Emerson, T.~Park, and K.~Schmidt.
\newblock {\em Principles of animal ecology}.
\newblock WB Saundere Co. Ltd., Philadelphia, 1949.

\bibitem{ostfeld}
R.~Ostfeld and C.~Canham.
\newblock Density-dependent processes in meadow voles: an experimental
  approach.
\newblock {\em Ecology}, 76:521--532, 1995.

\bibitem{arancibia}
C.~Arancibia-Ibarra and E.~Gonz{\'a}lez-Olivares.
\newblock A modified {Leslie--Gower} predator--prey model with hyperbolic
  functional response and {Allee} effect on prey.
\newblock {\em BIOMAT 2010 International Symposium on Mathematical and
  Computational Biology}, pages 146--162, 2011.

\bibitem{gonzalez3}
E.~Gonz{\'a}lez-Olivares, L.~Gallego-Berr{\'\i}o, B.~Gonz{\'a}lez-Ya{\~n}ez,
  and A.~Rojas-Palma.
\newblock Consequences of weak {Allee} effect on prey in the
  {May--Holling--Tanner} predator--prey model.
\newblock {\em Mathematical Methods in the Applied Sciences}, 38:5183--5186,
  2015.

\bibitem{gonzalez6}
E.~Gonz{\'a}lez-Olivares, J.~Mena-Lorca, A.~Rojas-Palma, and J.~Flores.
\newblock Dynamical complexities in the {Leslie--Gower} predator--prey model as
  consequences of the {Allee} effect on prey.
\newblock {\em Applied Mathematical Modelling}, 35:366--381, 2011.

\bibitem{tintinago}
P.~Tintinago-Ru{\'\i}z, L.~Restrepo-Alape, and E.~Gonz{\'a}lez-Olivares.
\newblock Consequences of {Weak Allee Effect} in a {Leslie--Gower-Type
  Predator--Prey Model} with a {Generalized Holling Type III Functional
  Response}.
\newblock In {\em Analysis, Modelling, Optimization, and Numerical Techniques},
  pages 89--103. Springer, 2015.

\bibitem{arancibia3}
C.~Arancibia-Ibarra, J.~Flores, G.~Pettet, and P.~van Heijster.
\newblock A {Holling--Tanner} predator--prey model with strong {Allee} effect.
\newblock {\em {International Journal} of {Bifurcation} and {Chaos}},
  29(11):1--16, 2019.
  
\bibitem{curtiss}
D.~Curtiss.
\newblock Recent extensions of Descartes' rule of signs.
\newblock {\em Annals of Mathematics},
  19(4):251--278, 1918.

\bibitem{dumortier}
F.~Dumortier, J.~Llibre, and J.~Art{\'e}s.
\newblock {\em Qualitative theory of planar differential systems}.
\newblock Springer Berlin Heidelberg, Springer-Verlag Berlin Heidelberg, 2006.

\bibitem{andronov}
A.~Andronov.
\newblock {\em Qualitative theory of second-order dynamic systems}, volume
  22054.
\newblock Halsted Press, 1973.

\bibitem{chicone}
C.~Chicone.
\newblock {\em Ordinary Differential Equations with Applications}, volume~34 of
  {\em Texts in Applied Mathematics}.
\newblock World Scientific, Springer-Verlag New York, 2006.

\bibitem{labouriau}
I.~Labouriau and A.~Rodrigues.
\newblock {\em Bifurcations from an attracting heteroclinic cycle under periodic forcing}.
\newblock {\em Journal of Differential Equations},
  269(5):4137--4174, 2020.

\bibitem{perko}
L.~Perko.
\newblock {\em Differential Equations and Dynamical Systems}.
\newblock Springer New York, 2001.

\bibitem{xiao2}
D.~Xiao and S.~Ruan.
\newblock {Bogdanov--Takens} bifurcations in predator--prey systems with
  constant rate harvesting.
\newblock {\em Fields Institute Communications}, 21:493--506, 1999.

\bibitem{huang}
J.~Huang, Y.~Gong, and S.~Ruan.
\newblock Bifurcation analysis in a predator-prey model with constant-yield
  predator harvesting.
\newblock {\em Discrete and Continuous Dynamical Systems Series B},
  18:2101--2121, 2013.

\bibitem{matcont}
A.~Dhooge, W.~Govaerts, and Y.~Kuznetsov.
\newblock Matcont: a matlab package for numerical bifurcation analysis of odes.
\newblock {\em ACM Transactions on Mathematical Software (TOMS)}, 29:141--164,
  2003.

\bibitem{saez}
E.~S{\'a}ez and E.~Gonz{\'a}lez-Olivares.
\newblock Dynamics on a predator--prey model.
\newblock {\em SIAM Journal on Applied Mathematics}, 59:1867--1878, 1999.

\bibitem{zhu}
C.~Zhu and K.~Lan.
\newblock Phase portraits, Hopf bifurcations and limit cycles of Leslie-Gower predator-prey systems with harvesting rates.
\newblock {\em Discrete \& Continuous Dynamical Systems-B}, 14(1):289, 2010.
\end{thebibliography}

\end{document}